\newtheorem{theorem}{Theorem}           
\newtheorem{lemma}[theorem]{Lemma}               
\newtheorem{corollary}[theorem]{Corollary}
\theoremstyle{definition}
\newtheorem{remark}[theorem]{Remark}
\providecommand{\url}[1]{#1}
\renewcommand{\le}{\leqslant}
\renewcommand{\ge}{\geqslant}
\newcommand{\lo}{\mathsf{lo}} 
\newcommand{\hi}{\mathsf{up}}
\newcommand{\R}{\mathbb{R}}                  
\newcommand{\N}{\mathbb{N}}                 
\newcommand{\Z}{\mathbb{Z}}
\newcommand{\ga}{\gamma}
\newcommand{\Ga}{\Gamma}
\newcommand{\de}{\delta}
\newcommand{\De}{\Delta}
\newcommand{\ka}{\kappa}
\newcommand{\sign}{\operatorname{sign}}
\renewcommand{\ka}{\lceil a\rceil}
\newcommand{\widesim}[2][1.5]{
  \mathrel{\underset{#2}{\scalebox{#1}[1]{$\sim$}}}
}
\newtheorem*{theorem*}{Theorem~\ref{th:}'}
\newtheorem*{thm*}{Theorem~\ref{th:der}'}
\newtheorem*{corollary*}{Corollary~\ref{cor:A}'}
\newtheorem*{cor*}{Corollary~\ref{cor:Ader}'}
\newtheorem{proposition}[theorem]{Proposition}
\newtheorem*{propA}{Proposition~A}
\theoremstyle{remark}
\numberwithin{equation}{section}
\numberwithin{theorem}{section}
\begin{document}

\title[Exact lower and upper bounds on the incomplete gamma function]{Exact lower and upper bounds on the incomplete gamma function}


\author{Iosif Pinelis}

\address{Iosif Pinelis, Department of Mathematical Sciences\\
Michigan Technological University\\
Hough\-ton, Michigan 49931, USA
\email{ipinelis@mtu.edu}}

\CorrespondingAuthor{Iosif Pinelis}


\date{08.25.2019
}                               

\keywords{Incomplete gamma function, exact bounds, inequalities, Gautschi inequalities
}

\subjclass{
33B20, 26D07, 26D15 
}

        

\begin{abstract}
Lower and upper bounds $B_a(x)$ on the incomplete gamma function $\Ga(a,x)$ are given for all real $a$ and all real $x>0$. These bounds $B_a(x)$ are exact in the sense that $B_a(x)\widesim{x\downarrow0}\Ga(a,x)$ and $B_a(x)\widesim{x\to\infty}\Ga(a,x)$. 
Moreover, the relative errors of these bounds are rather small for other values of $x$, away from $0$ and $\infty$. 
\end{abstract}

\maketitle



\tableofcontents

\section{Statements of main results}
\label{intro}


Take any real $a$ and any real $x>0$. The corresponding value of the incomplete gamma function is given by the formula 
\begin{equation}\label{eq:f}
	\Ga(a,x)=\int_x^\infty t^{a-1} e^{-t}\,dt. 
\end{equation}
Let 
\begin{equation}\label{eq:b}
	b_a:=\left\{
	\begin{alignedat}{2}
	&\Ga(a+1)^{1/(a-1)}&&\text{\quad if }a\in(-1,\infty)\setminus\{1\},\\
	&
	e^{1-\ga} &&\text{\quad if }a=1,   
	\end{alignedat}
	\right.
\end{equation}
where $\ga=0.577\ldots$ is the Euler constant. 

One may note here that $b_a>0$ for all $a>-1$. The value of $b_a$ at $a=1$ is defined in \eqref{eq:b} by continuity (see Lemma~\ref{prop:b} and its proof for details). 


Consider next 
\begin{equation}\label{eq:g}
G_a(x):=\left\{
	\begin{alignedat}{2}
	&x^{-2}\,e^{-x} &&\text{\quad if }a=-1, \\ 
	&\frac{(x+b_a)^a-x^a}{ab_a}\,e^{-x} &&\text{\quad if }a\in(-1,\infty)\setminus\{0\}, \\  
	&e^{-x}\ln\frac{x+1}x &&\text{\quad if }a=0.   
	\end{alignedat}	
		\right.
\end{equation}
One may note here that $G_a(x)$ is continuous in $a\ge-1$ for each $x>0$. 

Further, introduce 
\begin{equation}\label{eq:g-rev}
g_a(x):=\Big(\frac{(x+2)^a-x^a-2^a}{2a}+\Ga(a)\Big)\,e^{-x} 
\end{equation}
for $a>0$. 

Let us say that a bound $B_a(x)$ on $\Ga(a,x)$ is exact at $x=0$ if $B_a(x)\widesim{x\downarrow0}\Ga(a,x)$; similarly defined is the exactness at $x=\infty$.
As usual, we write $u\sim v$ for $u/v\to1$. 

\begin{theorem}\label{th:}
Take any real $a\ge-1$. Then (for all real $x>0$)  
\begin{alignat}{2}
		&\Ga(a,x)<G_a(x)&&\text{\quad if\quad }-1\le a<1, \label{eq:-1 le a<1}\\
		&g_a(x)=G_a(x)=\Ga(a,x)=e^{-x}&&\text{\quad if\quad }a=1, \label{eq:a=1} \\
		&g_a(x)<G_a(x)<\Ga(a,x)&&\text{\quad if\quad }1<a<2, \label{eq:1<a<2}\\
		&g_a(x)=G_a(x)=\Ga(a,x)=e^{-x}(1+x)&&\text{\quad if\quad }a=2, \notag \\
		&\Ga(a,x)<g_a(x)<G_a(x)&&\text{\quad if\quad }2<a<3, \notag \\
		&\Ga(a,x)=g_a(x)=e^{-x}(2 + 2 x + x^2)<G_a(x)&&\text{\quad if\quad }a=3, \notag \\
		&g_a(x)<\Ga(a,x)<G_a(x)&&\text{\quad if\quad }a>3.	\label{eq:a>3}
\end{alignat}

Also, for each real $a\ge0$ the bound $G_a(x)$ on $\Ga(a,x)$ is exact both at $x=0$ and at $x=\infty$. 
Further, for each real $a\ge1$ the bound $g_a(x)$ on $\Ga(a,x)$ is exact both at $x=0$ and at $x=\infty$. 
Moreover, the bound $G_a(x)$ is exact at $x=\infty$ for each real $a\ge-1$. 

\end{theorem}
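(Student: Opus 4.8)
The plan is to reduce every inequality to the sign of an integral of a fixed power-type function against the difference of two probability densities. Substituting $t=x+s$ in \eqref{eq:f} gives $\Ga(a,x)\,e^x=\int_0^\infty(x+s)^{a-1}e^{-s}\,ds=\E(x+S)^{a-1}$, where $S$ is exponential with mean $1$. The same substitution in \eqref{eq:g} yields $G_a(x)\,e^x=\frac1{b_a}\int_0^{b_a}(x+u)^{a-1}\,du=\E(x+U_b)^{a-1}$ for $a\ne0,-1$, with $U_b$ uniform on $[0,b_a]$, while \eqref{eq:g-rev} gives $g_a(x)\,e^x=\E(x+U_2)^{a-1}+c_a$ with $U_2$ uniform on $[0,2]$ and $c_a=\Ga(a)-2^{a-1}/a$ independent of $x$. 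The value of $b_a$ in \eqref{eq:b} is exactly the one making $\E U_b^{a-1}=\Ga(a)=\E S^{a-1}$ (matched $(a-1)$-st moments), and the constant $2$ is the one making $\E U_2=1=\E S$ (matched means); these matchings will force the vanishing of boundary terms below and produce the equalities at $x=0$.

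Fix $x>0$ and set $\phi(s):=(x+s)^{a-1}$, so $\phi''$ has the sign of $(a-1)(a-2)$. For $G_a$ versus $\Ga(a,\cdot)$ I would study $D(x):=\int_0^\infty\phi(s)\,g(s)\,ds$, where $g:=\frac1{b_a}\mathbf{1}_{[0,b_a]}-e^{-s}$, via its tail $G(s):=\int_s^\infty g$. A direct computation gives $G(s)=1-s/b_a-e^{-s}$ on $[0,b_a]$ and $G(s)=-e^{-s}$ beyond; this is concave on $[0,b_a]$ with $G(0)=0$, so $G$ keeps a constant sign when $b_a<1$ and changes sign exactly once, from $+$ to $-$, when $b_a>1$. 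Integrating by parts, $D(x)=(a-1)\int_0^\infty(x+s)^{a-2}G(s)\,ds$, and for $a>0$ the moment matching forces $\int_0^\infty s^{a-2}G(s)\,ds=0$. Writing $(x+s)^{a-2}=s^{a-2}(1+x/s)^{a-2}$ and noting that the weight $(1+x/s)^{a-2}$ is monotone in $s$ (increasing for $a<2$, decreasing for $a>2$), a single-crossing (Chebyshev-type) argument reads off the sign of $\int(x+s)^{a-2}G$ from the sign change of $G$; combined with the sign of $a-1$ this gives $D(x)>0$ for $0<a<1$, $D(x)<0$ for $1<a<2$, and $D(x)>0$ for $a>2$, matching \eqref{eq:-1 le a<1}--\eqref{eq:a>3}. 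For $-1<a<0$ the moment condition is unavailable (the integrals diverge at $s=0$), but there $b_a<1$, so $G<0$ throughout and $D(x)=(a-1)\int(x+s)^{a-2}G>0$ directly; strictness is automatic since the integrand does not vanish.

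The $g_a$ comparisons run on the same template with $\Delta(s):=(x+s)^{a-1}-s^{a-1}$, whose second derivative has the sign of $(a-1)(a-2)(a-3)$. One finds $g_a(x)e^x-\Ga(a,x)e^x=\E\Delta(U_2)-\E\Delta(S)=\int\Delta\,h$, where $h:=\frac12\mathbf{1}_{[0,2]}-e^{-s}$ satisfies $\int h=\int s\,h=0$; since the density difference $h$ changes sign exactly twice in the pattern $-,+,-$ while the means agree, $U_2$ precedes $S$ in the convex order, whence $\int\Delta\,h$ has the sign of $-\Delta''$ (strictly, as $\Delta''\ne0$), giving \eqref{eq:1<a<2}, the $2<a<3$ line, and the $g_a<\Ga(a,\cdot)$ half of \eqref{eq:a>3}. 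Similarly $g_a(x)e^x-G_a(x)e^x=\E\Delta(U_2)-\E\Delta(U_b)=\int\Delta\,k$ with $k:=\frac12\mathbf{1}_{[0,2]}-\frac1{b_a}\mathbf{1}_{[0,b_a]}$; its tail $K(s)=\int_s^\infty k$ is of constant sign (nonnegative when $b_a<2$, nonpositive when $b_a>2$), and since $\Delta'$ has constant sign $\sign(a-2)$ for $a>1$, integration by parts gives $\int\Delta'K\le0$, i.e.\ $g_a<G_a$, on $(1,2)$ and on $(2,3)$; here I would invoke the monotonicity fact $b_a\lessgtr2\iff a\lessgtr2$ (with $b_2=2$), extracted from the properties of $b_a$ in Lemma~\ref{prop:b}. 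For $a>3$ and $a=3$ no direct $g_a$–$G_a$ comparison is needed: the already-proved $g_a<\Ga(a,\cdot)$ and $\Ga(a,\cdot)<G_a$ combine by transitivity, and the equalities at $a=1,2,3$ and the cases $a=0,-1$ follow by direct evaluation.

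Finally, the exactness statements are asymptotic. As $x\downarrow0$, for $a>0$ the moment matching yields $G_a(0)=g_a(0)=\Ga(a)=\Ga(a,0)$, so the ratios tend to $1$, while for $a=0$ both $G_0(x)$ and $\Ga(0,x)$ are $\sim-\ln x$. As $x\to\infty$, the mean-value estimate $(x+b_a)^a-x^a\sim a\,b_a\,x^{a-1}$ (and likewise with $2$ in place of $b_a$) gives $G_a(x)\sim g_a(x)\sim x^{a-1}e^{-x}$, matching $\Ga(a,x)\sim x^{a-1}e^{-x}$; the cases $a=0$ (using $\ln(1+1/x)\sim1/x$) and $a=-1$ are immediate. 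This last computation is precisely the final ("moreover") clause, that $G_a(x)\widesim{x\to\infty}\Ga(a,x)$ for every $a\ge-1$. The main obstacle is the core sign analysis of the middle two paragraphs: establishing the exact sign-change counts of $g$, $h$, $k$ and of their tails $G$, $K$, pinning down the placement of $b_a$ relative to $1$ and $2$, and combining these with the right monotone-weight/convex-order argument so that the three breakpoints $a=1,2,3$ emerge exactly as the sign changes of $a-1$, $(a-1)(a-2)$, and $(a-1)(a-2)(a-3)$.
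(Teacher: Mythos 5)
Your proposal is correct, and it takes a genuinely different route from the paper. The paper proves the inequalities by applying special-case l'Hospital-type monotonicity rules (Proposition~A) to the ratios $r=G_a/\Ga(a,\cdot)$ and $r=g_a/\Ga(a,\cdot)$: it computes the derivative ratio $\rho$, determines its shape from the sign of $\rho''$ (together with Lemma~\ref{lem:rho''}), transfers that shape to $r$, and concludes from the endpoint limits $r(0+)=r(\infty-)=1$; this requires locating $b_a$ relative to $a$ and $2$ (Lemma~\ref{lem:}, proved via a digamma estimate), and the comparison $g_a<G_a$ is handled separately by an explicit derivative computation (Proposition~\ref{prop:compar}). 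You instead recast the three quantities as averages of $s\mapsto(x+s)^{a-1}$ against the exponential law, the uniform law on $[0,b_a]$, and (up to the additive constant $\Ga(a)-2^{a-1}/a$) the uniform law on $[0,2]$, observe the moment matchings $\E U_{b_a}^{a-1}=\Ga(a)=\E S^{a-1}$ and $\E U_2=1=\E S$ built into \eqref{eq:b} and \eqref{eq:g-rev}, and settle each sign by a single-crossing (Chebyshev) or two-crossing (Karlin--Novikoff, convex-order) argument. I verified the key computations: the tail $G(s)$ is concave on $[0,b_a]$ with $G(0)=0$, giving the $+-$ pattern exactly when $b_a>1$ (i.e.\ $a>0$) and constant negative sign when $b_a\le1$; the identity $\int_0^\infty s^{a-2}G=0$ for $a>0$ via the moment matching; the pattern $-,+,-$ of $h$ with $\int h=\int s\,h=0$; the constant sign of $K$ governed by $b_a\lessgtr2$, which by Lemma~\ref{prop:b} is equivalent to $a\lessgtr2$; and the sign bookkeeping against all seven lines of the theorem, including strictness and the exactness claims. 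Your route is more elementary and more explanatory: it needs only the monotonicity of $b_a$ with $b_0=1$, $b_2=2$, dispensing entirely with Lemma~\ref{lem:} and the l'Hospital-type machinery, and it makes transparent why the breakpoints $a=1,2,3$ are precisely the sign changes of $(a-1)$, $(a-1)(a-2)$, $(a-1)(a-2)(a-3)$; also, your $K$-argument covers $g_a$ vs.\ $G_a$ only on $(1,2)\cup(2,3)$, but transitivity does suffice for the theorem as stated. What the paper's method buys in exchange is extra structural information (the ratio $G_a/\Ga(a,\cdot)$ is shown to be monotone or unimodal in $x$, not merely of one sign relative to $1$) and a template that extends uniformly to the regime $a<-1$ of Theorem~\ref{th:a<-1}.

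One small inaccuracy: the case $a=0$ of \eqref{eq:-1 le a<1} is not a ``direct evaluation,'' as you claim at the end of your third paragraph. It is, however, the boundary case $b_0=1$ of your own constant-sign argument: there $G'(s)=e^{-s}-1<0$ for $s>0$, so $G<0$ on $(0,\infty)$ and $D(x)=-\int_0^\infty(x+s)^{-2}G(s)\,ds>0$, exactly as in your treatment of $-1<a<0$. Only $a=-1$ is genuinely a one-line evaluation, via $\Ga(-1,x)<x^{-2}\int_x^\infty e^{-t}\,dt=G_{-1}(x)$.
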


Thus, for $a>3$ the bounds $G_a(x)$ and $g_a(x)$ on $\Ga(a,x)$ bracket $\Ga(a,x)$ from above and below, respectively. 

In the simple cases $a=1$ and $a=2$, the bounds $G_a(x)$ and $g_a(x)$ on $\Ga(a,x)$ are exact in sense that they coincide with $\Ga(a,x)$. In the same sense, the bound $g_a(x)$ on $\Ga(a,x)$ is exact if $a=3$, in contrast with the bound $G_a(x)$. 

If $1<a<2$, then the lower bound $G_a(x)$ on $\Ga(a,x)$ is better (that is, closer to $\Ga(a,x)$) than the lower bound $g_a(x)$. If $2<a\le3$, then, vice versa, the upper bound $g_a(x)$ on $\Ga(a,x)$ is better 
than the upper bound $G_a(x)$. 

Theorem~\ref{th:}, which concerns the case $a\ge-1$, is complemented by the following result. 

\begin{theorem}\label{th:a<-1}
Take any real $a<-1$. Then (for all real $x>0$) 
\begin{equation}\label{eq:bnds,a<-1}
		g_a^\lo(x)<\Ga(a,x)<g_a^\hi(x), 
\end{equation}
where 
\begin{equation*}
	g_a^\lo(x):=\frac{ x^a e^{-x} (x-a-1)}{(x-a)^2+a}\quad\text{and}\quad 
	g_a^\hi(x):=\frac{ x^a e^{-x}}{x-a}. 
\end{equation*}


Also, each of the bounds $g_a^\lo(x)$ and $g_a^\hi(x)$ on $\Ga(a,x)$ is exact 
both at $x=0$ and at $x=\infty$. 

Actually, the statements in this theorem concerning $g_a^\hi(x)$ hold for all $a<0$.  
\end{theorem}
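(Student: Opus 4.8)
The plan is to prove each of the two inequalities by a monotonicity argument anchored at $x=\infty$. Using $\frac{\partial}{\partial x}\Ga(a,x)=-x^{a-1}e^{-x}$, set $h^\hi(x):=\Ga(a,x)-g_a^\hi(x)$ and $h^\lo(x):=\Ga(a,x)-g_a^\lo(x)$. Since $a$ is fixed and $x-a>0$ for $x>0$, all three of $\Ga(a,x)$, $g_a^\hi(x)$, $g_a^\lo(x)$ tend to $0$ as $x\to\infty$, so $h^\hi(x)\to0$ and $h^\lo(x)\to0$. Hence it suffices to show that $h^\hi$ is strictly increasing and $h^\lo$ strictly decreasing on $(0,\infty)$: the former gives $h^\hi<0$, i.e.\ $\Ga(a,x)<g_a^\hi(x)$, and the latter gives $h^\lo>0$, i.e.\ $\Ga(a,x)>g_a^\lo(x)$.

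To differentiate cleanly I would substitute $F(x):=e^xx^{-a}\Ga(a,x)$, which solves the linear ODE $xF'(x)=(x-a)F(x)-1$ with integrating factor $x^ae^{-x}$. Writing a candidate bound as $x^ae^{-x}w(x)$ and putting $\Delta_w(x):=xw'(x)-(x-a)w(x)+1$, a short computation using the ODE gives the key identity $\big(x^ae^{-x}(F-w)\big)'=-x^{a-1}e^{-x}\,\Delta_w(x)$. For the upper bound $w=U:=\tfrac1{x-a}$, so $g_a^\hi=x^ae^{-x}U$ and $h^\hi=x^ae^{-x}(F-U)$, and one finds $\Delta_U(x)=-\tfrac{x}{(x-a)^2}$, whence $(h^\hi)'(x)=\dfrac{x^ae^{-x}}{(x-a)^2}>0$ for every $a<0$ and $x>0$. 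This proves $\Ga(a,x)<g_a^\hi(x)$ for all $a<0$, as claimed in the final sentence of the theorem.

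The hard part will be the lower bound, where $w=V:=\tfrac{x-a-1}{(x-a)^2+a}$ and $h^\lo=x^ae^{-x}(F-V)$. The crux is to evaluate $\Delta_V$: after clearing the denominator $(x-a)^2+a$, the numerator is a quartic in $s:=x-a$ whose $s^4$- and $s^3$-terms cancel and which collapses to the perfect square $2(s+a)^2=2x^2$, giving $\Delta_V(x)=\dfrac{2x^2}{\big((x-a)^2+a\big)^2}$ and therefore $(h^\lo)'(x)=-\dfrac{2\,x^{a+1}e^{-x}}{\big((x-a)^2+a\big)^2}$. This is negative for $x>0$ precisely when the denominator is positive, and here the hypothesis $a<-1$ is exactly what is needed: for $x>0$ we have $x-a>-a=|a|$, so $(x-a)^2+a>a^2+a=a(a+1)>0$. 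Verifying this algebraic collapse and the positivity of $(x-a)^2+a$ is the heart of the argument; the rest is routine.

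Finally, exactness at both endpoints follows for $a<-1$ by squeezing, once the two-sided bound $g_a^\lo(x)<\Ga(a,x)<g_a^\hi(x)$ is established: an elementary computation gives $\dfrac{g_a^\lo(x)}{g_a^\hi(x)}=\dfrac{(x-a-1)(x-a)}{(x-a)^2+a}$, which tends to $1$ both as $x\downarrow0$ (the limit being $\tfrac{(-a-1)(-a)}{a^2+a}=1$) and as $x\to\infty$, so each of $g_a^\lo/\Ga$ and $\Ga/g_a^\hi$ is squeezed to $1$ at each endpoint. For the extension of the $g_a^\hi$-statements to all $a<0$, where no lower bound is available, exactness of $g_a^\hi$ instead follows by matching the standard asymptotics $\Ga(a,x)\sim-x^a/a$ as $x\downarrow0$ and $\Ga(a,x)\sim x^{a-1}e^{-x}$ as $x\to\infty$ against the corresponding limits of $g_a^\hi(x)=x^ae^{-x}/(x-a)$.
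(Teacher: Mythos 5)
Your proof is correct, and it takes a genuinely different route from the paper. The paper proves both inequalities via its l'Hospital-type monotonicity rule (Proposition~A): with $f=\Ga(a,\cdot)$ and $g$ the bound, it computes $\rho=g'/f'$, shows $\rho$ is decreasing-increasing (resp.\ increasing-decreasing), deduces the same for $r=g/f$, and combines this with $r(0+)=r(\infty-)=1$ to get $r<1$ (resp.\ $r>1$). You instead work with the difference $h=\Ga(a,\cdot)-B$ and show $h'$ has constant sign, anchoring at $h(\infty-)=0$. The two are closely linked: with your notation one has $\rho=1-\Delta_w$, so the paper needs the monotonicity pattern of $\rho$ (one more differentiation plus the boundary values of $r$ at both ends), while you only need the sign of $\rho-1$, which your perfect-square identity $\Delta_V=2x^2/\bigl((x-a)^2+a\bigr)^2$ (I checked the quartic collapse; it is right) delivers directly. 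This makes your argument more elementary --- no ratio machinery, and only the single anchor at $x=\infty$ --- at the cost of depending on the lucky fact that $\Delta_w$ is single-signed for these particular bounds; the paper's ratio method is the more robust template, and indeed elsewhere in the paper (e.g.\ for $G_a$ and $g_a$) the analogous quantity does change sign and the difference trick would not work. Your exactness argument by squeezing $g_a^\lo/g_a^\hi\to1$ at both endpoints is also different from the paper (which uses l'Hospital for limits on $r$ at each endpoint) and is clean. Two small polish points: the phrase ``negative precisely when the denominator is positive'' is slightly off, since the denominator is a square and hence automatically nonnegative --- the real role of $(x-a)^2+a>0$ (which you do verify for $a<-1$) is to guarantee that $g_a^\lo$ has no pole on $(0,\infty)$ so the monotonicity argument runs over the whole half-line; and you should state explicitly that $\Ga(a,x)\le x^{a-1}e^{-x}\to0$ as $x\to\infty$ for $a<0$ to justify $h(\infty-)=0$, though this is immediate.
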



The bounds on $\Ga(a,x)$ presented in Theorems~\ref{th:} and \ref{th:a<-1} are rather simple and appear natural. In particular, we shall see in Section~\ref{proofs} that the different pieces in the proofs of these bounds fit together 
tightly.

\section{Discussion}
\label{discuss}

Another nice asymptotic exactness property of the bracketing bounds $g_a^\lo(x)$ and $g_a^\hi(x)$ on $\Ga(a,x)$ is as follows. 

\begin{proposition}\label{prop:a->-infty,unif}
\begin{equation}\label{eq:a->-infty,unif}
g_a^\lo(x)\widesim{a\to-\infty}\Ga(a,x)\widesim{a\to-\infty}g_a^\hi(x)\quad\text{uniformly in $x>0$. }   
\end{equation}
Equivalently, 
\begin{equation}\label{eq:max ghi/glo}
	\max_{x>0}\dfrac{g_a^\hi(x)}{g_a^\lo(x)}\underset{a\to-\infty}\longrightarrow1. 
\end{equation}
\end{proposition}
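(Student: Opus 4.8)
The plan is to derive both displayed claims \eqref{eq:a->-infty,unif} and \eqref{eq:max ghi/glo} from a single explicit evaluation of the ratio $g_a^\hi(x)/g_a^\lo(x)$. First I would record that the two statements are equivalent and that both follow once this ratio is controlled. Indeed, by the bracketing inequality \eqref{eq:bnds,a<-1} of Theorem~\ref{th:a<-1}, for all $x>0$ and all $a<-1$,
\[
1<\frac{\Ga(a,x)}{g_a^\lo(x)}<\frac{g_a^\hi(x)}{g_a^\lo(x)}\quad\text{and}\quad \frac{g_a^\lo(x)}{g_a^\hi(x)}<\frac{\Ga(a,x)}{g_a^\hi(x)}<1,
\]
so that both $\Ga(a,x)/g_a^\lo(x)$ and $\Ga(a,x)/g_a^\hi(x)$ are squeezed toward $1$, uniformly in $x>0$, as soon as $\sup_{x>0}g_a^\hi(x)/g_a^\lo(x)\to1$. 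Thus the entire proposition reduces to proving \eqref{eq:max ghi/glo}.

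Next I would compute the ratio directly from the definitions. The common factor $x^ae^{-x}$ cancels, and using the identity $(x-a)^2+a-(x-a)(x-a-1)=(x-a)+a=x$ one gets
\[
\frac{g_a^\hi(x)}{g_a^\lo(x)}=\frac{(x-a)^2+a}{(x-a)(x-a-1)}=1+\frac{x}{(x-a)(x-a-1)}.
\]
Writing $c:=-a>1$, so that $x-a=x+c$ and $x-a-1=x+c-1$, the task becomes to maximize $h(x):=x/\big((x+c)(x+c-1)\big)$ over $x>0$.

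Then I would carry out this one-variable maximization. Since $h(0{+})=0$ and $h(x)\sim1/x\to0$ as $x\to\infty$, the supremum is attained at an interior critical point; differentiating shows that the numerator of $h'$ is proportional to $c(c-1)-x^2$, so the maximizer is $x_0=\sqrt{c(c-1)}$, and a short substitution (using $x_0^2=c^2-c$) gives
\[
\max_{x>0}\frac{g_a^\hi(x)}{g_a^\lo(x)}=1+h(x_0)=1+\frac{1}{2\sqrt{c(c-1)}+2c-1}.
\]
Since $c-1<\sqrt{c(c-1)}<c$ for $c>1$, the added term lies strictly between $1/(4c-1)$ and $1/(4c-3)$, hence tends to $0$ as $c=-a\to\infty$. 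This yields \eqref{eq:max ghi/glo} and, by the reduction of the first paragraph, \eqref{eq:a->-infty,unif}.

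There is no deep difficulty here; the only point requiring attention is the \emph{uniformity} in $x$. The argument handles it automatically because the maximization is performed over all $x>0$ simultaneously and produces a bound on the relative gap that depends on $a$ alone. The mild subtlety I would make sure to address is that the extremum is genuinely interior, so that the supremum is an attained maximum rather than a value approached at the endpoints $x\downarrow0$ or $x\to\infty$; this is settled by the limiting values $h(0{+})=0$ and $h(\infty)=0$ noted above.
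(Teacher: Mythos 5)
Your proof is correct and follows essentially the same route as the paper's: reduce to \eqref{eq:max ghi/glo}, compute the ratio $g_a^\hi(x)/g_a^\lo(x)$ explicitly, locate its unique interior maximizer at $x=\sqrt{a(a+1)}$ from the sign of the derivative, and check that the maximal value tends to $1$ as $a\to-\infty$ (your closed form $1+\bigl(2\sqrt{c(c-1)}+2c-1\bigr)^{-1}$ with $c=-a$ agrees with the paper's $\frac{2}{1+\sqrt{1+1/a}}$). The extra details you supply -- the squeeze argument making the ``equivalently'' precise and the verification that the supremum is attained in the interior -- are welcome but not a departure from the paper's argument.
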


In contrast with \eqref{eq:a->-infty,unif}--\eqref{eq:max ghi/glo}, bounds 
$g_a(x)$ and $G_a(x)$ on $\Ga(a,x)$ in \eqref{eq:a>3} -- which bracket $\Ga(a,x)$  for $a\ge3$ -- exhibit the following explosion phenomenon: 

\begin{proposition}\label{prop:a->infty}
\begin{equation}\label{eq:max ghi/glo,infty}
	\max_{x>0}\dfrac{G_a(x)}{\Ga(a,x)}\underset{a\to\infty}\longrightarrow\infty
	\quad\text{and}\quad
	\max_{x>0}\dfrac{\Ga(a,x)}{g_a(x)}\underset{a\to\infty}\longrightarrow\infty.  
\end{equation}
\end{proposition}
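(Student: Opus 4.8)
The plan is to establish each of the two divergences in \eqref{eq:max ghi/glo,infty} by exhibiting, for every large $a$, a single test point $x=x_a$ at which the relevant ratio already tends to $\infty$; since $\max_{x>0}$ dominates the value at any one point, this suffices. The guiding heuristic is that every quantity here is of the form $a^a e^{-ra}$ (with $r$ a ``rate'') times a factor only polynomial in $a$, so I would take $x_a$ of order $a$ and merely compare exponential rates. Two preliminary facts are needed. \emph{(i)} $b_a\sim a/e$: indeed, Stirling's formula gives $\ln b_a=\frac1{a-1}\ln\Ga(a+1)=\frac{a\ln a-a+O(\ln a)}{a-1}=\ln a-1+O\big(\tfrac{\ln a}a\big)$, so $b_a=\tfrac ae\,(1+O(\tfrac{\ln a}a))$. \emph{(ii)} For fixed $c\in(0,1)$, $\Ga(a,ca)\sim\Ga(a)$, besides the trivial $\Ga(a,ca)\le\Ga(a)$. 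The equivalence is the statement $\P(Y_a>ca)\to1$ for $Y_a$ of Gamma distribution with shape $a$ (mean and variance both $a$); Chebyshev's inequality gives $\P(Y_a\le ca)\le\frac{\Var Y_a}{((1-c)a)^2}=\frac1{(1-c)^2a}\to0$, whence $\Ga(a,ca)=\Ga(a)\,\P(Y_a>ca)\ge\Ga(a)\big(1-\tfrac1{(1-c)^2a}\big)$.

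For the first limit I would take $x_a:=(1-1/e)\,a$. By \emph{(i)}, $x_a+b_a=a\big(1+O(\tfrac{\ln a}a)\big)$ and $\big(\tfrac{x_a}{x_a+b_a}\big)^a\to(1-1/e)^a\to0$, so the subtracted term in the numerator of $G_a$ is negligible and $G_a(x_a)\ge\tfrac12\,\frac{(x_a+b_a)^a}{a b_a}\,e^{-x_a}$ for large $a$. Taking logarithms and using $a\ln(x_a+b_a)=a\ln a+O(\ln a)$ and $\ln(ab_a)=O(\ln a)$ yields $\ln G_a(x_a)=a\ln a-(1-1/e)a+O(\ln a)$, while $\ln\Ga(a,x_a)\le\ln\Ga(a)=a\ln a-a+O(\ln a)$. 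Subtracting gives $\ln\frac{G_a(x_a)}{\Ga(a,x_a)}\ge\frac ae+O(\ln a)\to\infty$, which is the first part of \eqref{eq:max ghi/glo,infty}. (Any fixed $c\in(0,1)$ works, the rate being $a\,[\ln(c+1/e)-c+1]>0$; $c=1-1/e$ merely maximizes it.)

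For the second limit I would take $x_a:=a/e$, so that the factor $e^{-x_a}=e^{-a/e}$ present in $g_a$ but absent from $\Ga(a,x_a)$ drives the blow-up. The key is that the bracket in $g_a(x_a)$ is $\sim\Ga(a)$: the elementary bound $(x_a+2)^a=x_a^a(1+2e/a)^a\le x_a^a\,e^{2e}$ gives $0<\frac{(x_a+2)^a-x_a^a-2^a}{2a}\le\frac{e^{2e}-1}{2a}\,(a/e)^a$, and since $(a/e)^a=a^a e^{-a}=\Theta(\sqrt a)\,\Ga(a)$ by Stirling, this first term is $O(\Ga(a)/\sqrt a)=o(\Ga(a))$; hence $g_a(x_a)\le(1+o(1))\,\Ga(a)\,e^{-a/e}$. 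Combining this with $\Ga(a,x_a)\ge\tfrac12\Ga(a)$ from \emph{(ii)} (valid as $1/e<1$) gives $\frac{\Ga(a,x_a)}{g_a(x_a)}\ge\frac{1/2+o(1)}{1+o(1)}\,e^{a/e}\to\infty$, completing the proof.

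The step demanding the most care---and the main obstacle---is fact \emph{(i)} together with the bookkeeping it supports. Because $(a/e)^a$ and $\Ga(a)$ share the same super-exponential factor $a^a e^{-a}$ and differ only by the polynomial factor $\Theta(\sqrt a)$, one must control the secondary and subtracted terms (the $-x_a^a$ inside $G_a$, and the whole fraction inside $g_a$) precisely enough to be sure they do not perturb the $e^{a/e}$ growth; in particular the $O(\tfrac{\ln a}a)$ precision in \emph{(i)} is exactly what keeps $a\ln(x_a+b_a)$ equal to $a\ln a+O(\ln a)$ rather than acquiring a spurious term of order $a$. Once one accepts that only the exponential rate matters and that all polynomial-in-$a$ factors are harmless, both estimates follow as above; the pleasant symmetry is that both ratios diverge at exactly the rate $e^{a/e}$.
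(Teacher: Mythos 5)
Your argument is correct, and for the first limit it is essentially the paper's own proof: both evaluate $G_a$ at a point of order $a$ (the paper at $x=a$, you at $x=(1-1/e)a$), use Stirling to get $b_a\sim a/e$, bound $\Ga(a,x)$ above by $\Ga(a)$, and conclude that the ratio grows like $(1+1/e)^a$ up to subexponential factors; your choice of $c=1-1/e$ only optimizes the constant in the exponent. For the second limit, however, you take a genuinely different route. The paper tests at $x=a-1$, where $\Ga(a,a-1)$ and $g_a(a-1)$ have the \emph{same} exponential rate, so it must produce a sharp lower bound $\Ga(a,a-1)>\big(\tfrac{a-1}{e}\big)^{a-1}\sqrt{\pi(a-1)/2}$ via a Gaussian minorant of $t^{a-1}e^{-t}$ (using $w''(t)>-1/(a-1)$ for $t>a-1$) and a matching asymptotic $g_a(a-1)\sim\tfrac{e^2-1}{2}\big(\tfrac{a-1}{e}\big)^{a-1}$, yielding divergence only at the polynomial rate $\sqrt{a}$. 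You instead test at $x=a/e$, deep in the left tail of the Gamma density, where Chebyshev's inequality for a Gamma$(a)$ random variable trivially gives $\Ga(a,a/e)\ge\tfrac12\Ga(a)$, while the elementary bound $(1+2e/a)^a\le e^{2e}$ shows the bracket in $g_a(a/e)$ is $(1+o(1))\Ga(a)$; the mismatch between $e^{-a/e}$ and the absent exponential in $\Ga(a,a/e)$ then forces divergence at the exponential rate $e^{a/e}$. Your version thus avoids the Laplace-type estimate entirely and proves a quantitatively stronger blow-up; the paper's version, by working at the ``critical'' point $x=a-1$, pinpoints where the relative error of $g_a$ is of the modest order $\sqrt a$, which is finer local information. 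All the supporting estimates you use (the $O(\ln a/a)$ precision in $\ln b_a$, the superadditivity giving positivity of $(x+2)^a-x^a-2^a$, and $(a/e)^a=\Theta(\sqrt a)\,\Ga(a)$) check out.
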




\medskip
\hrule
\medskip

One can find quite a few bounds on the incomplete gamma function in the literature, including papers
\cite{gautschi59,temme79,alzer97,qi99,nat-pal00,paris02,qi02,laforgia-nat06,borwein-chan,alzer-baricz12,neuman13,yang-zhang-chu17,greengard-rokhlin19}. 

A distinctive feature of our bounds on $\Ga(a,x)$ is their exactness both at $x=0$ and at $x=\infty$. It appears that this feature can be found only in few other papers. 

Apparently the first of them was the paper by Gautschi \cite{gautschi59}, containing the inequalities 
\begin{equation}\label{eq:gautschi}
	H(p,1/2,v)<e^{v^p}\int_v^\infty e^{-u^p}\,du
	\le H(p,c_p,v)  
\end{equation}
for real $p>1$ and real $v>0$, 
where $H(p,c,v):=c\big((v^p+1/c)^{1/p}-v\big)$ and $c_p:=\Ga(1+1/p)^{p/(p-1)}$. 

As noted in \cite{gautschi59}, it is easy to rewrite inequalities \eqref{eq:gautschi} in terms of the incomplete gamma function. Indeed, using the substitutions $p=1/a$, $v=x^{1/p}=x^a$, and $u=t^{1/p}=t^a$, we see that the second inequality in \eqref{eq:gautschi} (for $p>1$) becomes the (non-strict version of the) case of inequality \eqref{eq:-1 le a<1} for $a\in(0,1)$. The limit case $p=\infty$ of the second inequality in \eqref{eq:gautschi} similarly corresponds to the case $a=0$ of inequality \eqref{eq:-1 le a<1}. 
Thus, the second inequality in \eqref{eq:gautschi} can be considered a special case of \eqref{eq:-1 le a<1}, and it is therefore exact at $x=0$ and at $x=\infty$ -- or, in terms of \eqref{eq:gautschi}, at $v=0$ and at $v=\infty$. 

However, it is easy to see that the lower bound $H(p,1/2,v)$ on $e^{v^p}\int_v^\infty e^{-u^p}\,du$ in \eqref{eq:gautschi} is exact only at $v=\infty$, but not at $v=0$. 
The bound $g_a(x)$, defined in \eqref{eq:g-rev}, can then be viewed as a ``corrected'' version of $H(p,1/2,v)$ that is exact, for appropriate values of $a$, both at $x=0$ and at $x=\infty$. 

It was pointed out in the review of the paper \cite{gautschi59} in Mathematical Reviews \cite{MR-gautschi59} that ``As it stands, the proof is only valid if $p$ is an integer, but, in a correction, the author has indicated a modification which validates it for all $p>1$.'' Apparently \cite{gaut-personal}, no proof of \eqref{eq:gautschi} for the values $p\in(1,\infty)\setminus\Z$ -- which correspond to $a\in(0,1)\setminus\{\frac12,\frac13,\frac14,\dots\}$ -- has so far been published. 

Gautschi's result was complemented in \cite{yang-zhang-chu17}, where it was shown that $\Ga(a,x)>G_a(x)$ for $a\in(1,2)$ and $\Ga(a,x)<G_a(x)$ for $a>2$ (again, with $x>0$); cf.\ Theorem~\ref{th:} of the present paper. The method used in \cite{yang-zhang-chu17} was based on results in \cite{pin06}, restated, however, in terms of the function $H_{f,g}:=\frac{f'}{g'}\,g-f$, which differs only by the sign factor $\sign(g')$ from the function $\tilde\rho$ introduced and used in \cite{pin06}.

\medskip
\hrule
\medskip

\begin{remark}\label{rem:omissions}
For ``most'' real values of $a$, Theorems~\ref{th:} and \ref{th:a<-1} taken together provide both lower and upper bounds on $\Ga(a,x)$, each of which is exact both at $x=0$ and at $x=\infty$. However, there are a few gaps in the coverage by Theorems~\ref{th:} and \ref{th:a<-1}: 
\begin{enumerate}[{Gap} 1:]
	\item the absence of a lower bound on $\Ga(a,x)$ for $a\in[-1,1)$;
	\item the absence of an upper bound on $\Ga(a,x)$ for $a\in(1,2)$;
	\item the absence of a lower bound on $\Ga(a,x)$ for $a\in(2,3)$. 	
\end{enumerate}
Moreover, we shall address 
\begin{enumerate}[{Gap} 4:]
	\item for $a\in[-1,0)$, the upper bound $G_a(x)$ on $\Ga(a,x)$ is exact only at  $x=\infty$, but not at $x=0$. 	
\end{enumerate}
\qed
\end{remark}


To fill these gaps, and to address the explosion phenomenon presented in Proposition~\ref{prop:a->infty}, we can use the following shift technique. 

%
%

Integrating by parts, we have  
\begin{equation}\label{eq:Ga,fwd}
	\Ga(a,x)=x^{a-1} e^{-x}+(a-1)\Ga(a-1,x)
\end{equation}
for all real $a$ and all $x>0$. Iterating this recursion in $a$, we see that  
\begin{equation}\label{eq:sum}
	\Ga(a,x)=x^{a-1} e^{-x}\,\sum_{j=0}^{k-1}(a-1)_j\, x^{-j}+(a-1)_k\, \Ga(a-k,x)
\end{equation}
for all natural $k
$ and all real $x>0$, where $(u)_j:=\prod_{i=0}^{j-1}(u-i)$, the $j$th falling factorial of $u$. 

Replacing now $\Ga(a-k,x)$ on the right-hand side of \eqref{eq:sum} by a bound $B_{a-k}(x)$ on $\Ga(a-k,x)$, we obtain the new, modified bound
\begin{equation}\label{eq:B}
	B_{a;k}(x):=x^{a-1} e^{-x}\,\sum_{j=0}^{k-1}(a-1)_j\, x^{-j}+(a-1)_k\, B_{a-k}(x)  
\end{equation}
on $\Ga(a,x)$, which may be thought of as the (forward) $k$-shift of the bound $B_{a-k}(x)$ on $\Ga(a-k,x)$. 
In particular, if $(a-1)_k\ge0$, then this forward $k$-shift will transform a lower (respectively, upper) bound $B_{a-k}(x)$ on $\Ga(a-k,x)$ into a lower (respectively, upper) bound $B_{a;k}(x)$ on $\Ga(a,x)$. Similarly, if $(a-1)_k\le0$, then the forward $k$-shift will transform a lower (respectively, upper) bound into an upper (respectively, lower) one. 


For any bound $B_a(x)$ on $\Ga(a,x)$, consider the corresponding (signed) error and (signed) relative error of the approximation of $\Ga(a,x)$ by the bound $B_a(x)$: 
\begin{equation*}
	\De B_a(x):=B_a(x)-\Ga(a,x)\quad\text{and}\quad \de B_a(x):=\frac{\De B_a(x)}{\Ga(a,x)}. 
\end{equation*}
It is then obvious from \eqref{eq:sum} and \eqref{eq:B} that 
\begin{equation}\label{eq:De}
	\De B_{a;k}(x)=(a-1)_k\, \De B_{a-k}(x). 
\end{equation}
 
Now we obtain the following simple ``relative-error-taming" 

\begin{proposition}\label{prop:taming}
Take any natural $k$, any real $a\ge k$, and any real $x>0$. 
\begin{enumerate}[(i)]
	\item If $B_{a-k}(x)$ is a lower bound on $\Ga(a-k,x)$, then $B_{a;k}(x)$ is a lower bound on $\Ga(a,x)$, and $\de B_{a-k}(x)\le\de B_{a;k}(x)\le0$. 
	\item If $B_{a-k}(x)$ is an upper bound on $\Ga(a-k,x)$, then $B_{a;k}(x)$ is also an upper bound on $\Ga(a,x)$, and $0\le\de B_{a;k}(x)\le\de B_{a-k}(x)$. 
\end{enumerate} 
\end{proposition}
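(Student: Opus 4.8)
The plan is to derive everything from the error identity \eqref{eq:De}, the recursion \eqref{eq:sum}, and two elementary positivity facts. First I would observe that the hypothesis $a\ge k$ forces each of the factors $a-1,a-2,\dots,a-k$ of the falling factorial $(a-1)_k$ to be nonnegative, so that $c:=(a-1)_k\ge0$; more than that, every partial product $(a-1)_j$ with $0\le j\le k-1$ is \emph{strictly} positive, since its smallest factor is $a-j\ge a-k+1\ge1$. Hence the ``head''
\[
S:=x^{a-1}e^{-x}\sum_{j=0}^{k-1}(a-1)_j\,x^{-j}
\]
common to \eqref{eq:sum} and \eqref{eq:B} is strictly positive, and \eqref{eq:sum} takes the compact form $\Ga(a,x)=S+c\,\Ga(a-k,x)$. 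The second fact is that $\Ga(a,x)>0$ and $\Ga(a-k,x)>0$ for every $x>0$, immediate from \eqref{eq:f} since the integrand there is positive.

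Granting these, the bound-preservation and one-sided relative-error claims are immediate. By \eqref{eq:De}, $\De B_{a;k}(x)=c\,\De B_{a-k}(x)$ with $c\ge0$; so a lower bound ($\De B_{a-k}(x)\le0$) is sent to a lower bound ($\De B_{a;k}(x)\le0$), and dividing by $\Ga(a,x)>0$ gives $\de B_{a;k}(x)\le0$, which is the right-hand inequality of (i). The upper-bound case (ii) is identical with all inequalities reversed, yielding $\de B_{a;k}(x)\ge0$.

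The crux is the comparison of $\de B_{a;k}(x)$ with $\de B_{a-k}(x)$, and here I would simply compute the difference outright:
\[
\de B_{a;k}(x)-\de B_{a-k}(x)
=\De B_{a-k}(x)\,\frac{c\,\Ga(a-k,x)-\Ga(a,x)}{\Ga(a,x)\,\Ga(a-k,x)}
=-\,\De B_{a-k}(x)\,\frac{S}{\Ga(a,x)\,\Ga(a-k,x)},
\]
where the last equality uses $\Ga(a,x)-c\,\Ga(a-k,x)=S$. Because $S>0$ and both gamma values are positive, the trailing fraction is positive; thus in case (i) (where $\De B_{a-k}(x)\le0$) the difference is $\ge0$, giving $\de B_{a-k}(x)\le\de B_{a;k}(x)$, and in case (ii) (where $\De B_{a-k}(x)\ge0$) it is $\le0$, giving $\de B_{a;k}(x)\le\de B_{a-k}(x)$. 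Combining with the previous paragraph closes both chains of inequalities.

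I do not anticipate a genuine obstacle; the one point demanding care is the sign bookkeeping of the head $S$. It is precisely here that the hypothesis $a\ge k$ is used in full strength, rather than the weaker $(a-1)_k\ge0$: the assumption $a\ge k$ guarantees that \emph{all} the partial products $(a-1)_j$, and hence $S$ itself, are positive, which is what forces the relative error to contract in magnitude under the shift.
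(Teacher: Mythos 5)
Your proof is correct and rests on exactly the same ingredients as the paper's one-line argument: the error identity \eqref{eq:De} together with the decomposition $\Ga(a,x)=S+(a-1)_k\,\Ga(a-k,x)$ from \eqref{eq:sum} and the positivity of the head sum $S$ (the paper packages the latter as the inequality $\Ga(a,x)\ge(a-1)_k\,\Ga(a-k,x)$ and compares denominators, while you compute the difference of relative errors explicitly). Your written-out version has the minor virtue of handling the degenerate case $(a-1)_k=0$ without any division by that factor, but it is essentially the same proof.
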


This follows immediately from \eqref{eq:De} and the inequality $$\Ga(a,x)\ge(a-1)_k\, \Ga(a-k,x);$$ in turn, the latter inequality follows immediately, in the conditions of Proposition~\ref{prop:taming}, from identity \eqref{eq:sum}.  

So, if $a\ge k\in\N$, then the forward $k$-shift can only reduce the absolute value of the relative error of a bound $B_a(x)$ on $\Ga(a,x)$. Immediately from Theorem~\ref{th:} and Proposition~\ref{prop:taming}, we obtain 

\begin{corollary}\label{prop:shift}
Take any real $a$ and any real $x>0$. 
\begin{enumerate}[(i)]
	\item If $a>2$ and $k=\ka-2$, then $1<a-k\le2$ and $\de G_{a-k}(x)\le\de G_{a;k}(x)\le0$. 
	\item If $a>3$ and $k=\ka-3$, then $2<a-k\le3$ and $0\le\de g_{a;k}(x)\le\de g_{a-k}(x)$.
	\item If $a>4$ and $k=\ka-4$, then $3<a-k\le4$ and 
	$\de g_{a-k}(x)\le\de g_{a;k}(x)\le0\le\de G_{a;k}(x)\le\de G_{a-k}(x)$.
\end{enumerate} 
\end{corollary}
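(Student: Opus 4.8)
The plan is to read this corollary off directly from Theorem~\ref{th:} and Proposition~\ref{prop:taming}, with no genuinely new analysis required. First I would verify that in each of the three parts the shifted index $a-k$ lands in the interval where Theorem~\ref{th:} fixes the sign of the relevant error, and that $k$ is a positive integer with $a>k$, so that the hypotheses of Proposition~\ref{prop:taming} are met. The only computation is the observation that $a-k=a-\ka+m$ with $m\in\{2,3,4\}$, and since $\ka-a\in[0,1)$ this gives $a-k\in(m-1,m]$; in particular $a-k>m-1>0$, while the standing assumption $a>m$ forces $\ka\ge m+1$ and hence $k=\ka-m\ge1$, a natural number with $a>k$.

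For part (i), with $m=2$ this yields $1<a-k\le2$, and by \eqref{eq:1<a<2} together with the equality line at $a-k=2$ we have $G_{a-k}(x)\le\Ga(a-k,x)$, so that $G_{a-k}$ is a lower bound on $\Ga(a-k,\cdot)$; applying Proposition~\ref{prop:taming}(i) with $B=G$ then gives $\de G_{a-k}(x)\le\de G_{a;k}(x)\le0$. For part (ii), with $m=3$ we get $2<a-k\le3$, and by the two lines preceding \eqref{eq:a>3} the function $g_{a-k}$ is an upper bound on $\Ga(a-k,\cdot)$, so Proposition~\ref{prop:taming}(ii) with $B=g$ delivers $0\le\de g_{a;k}(x)\le\de g_{a-k}(x)$.

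For part (iii), with $m=4$ we get $3<a-k\le4$, so \eqref{eq:a>3} applies throughout this range and supplies \emph{both} one-sided bounds: $g_{a-k}$ is a lower bound and $G_{a-k}$ an upper bound on $\Ga(a-k,\cdot)$. I would then invoke Proposition~\ref{prop:taming}(i) with $B=g$ to get $\de g_{a-k}(x)\le\de g_{a;k}(x)\le0$, and Proposition~\ref{prop:taming}(ii) with $B=G$ to get $0\le\de G_{a;k}(x)\le\de G_{a-k}(x)$, and chain these two through their common endpoint $0$ to obtain the asserted two-sided bracket.

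There is essentially no obstacle here; the proof is immediate once the bookkeeping is set up. The only points needing care are the boundary values $a-k=m$ (which occur exactly when $a$ is an integer), where the relevant bound coincides with $\Ga(a-k,\cdot)$ and is therefore a zero-error lower or upper bound, so the inequalities collapse consistently to $\de G_{a;k}\equiv0$ or $\de g_{a;k}\equiv0$; and the verification that $k\in\N$ and $a\ge k$, both of which follow from the elementary estimate $\ka-a\in[0,1)$ recorded above.
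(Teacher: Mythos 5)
Your proposal is correct and follows exactly the route the paper takes: the paper derives this corollary ``immediately from Theorem~\ref{th:} and Proposition~\ref{prop:taming},'' and your bookkeeping ($\ka-a\in[0,1)$ giving $a-k\in(m-1,m]$ and $k\in\N$ with $a>k$, then the sign information from Theorem~\ref{th:} feeding into the two parts of Proposition~\ref{prop:taming}) is precisely the verification the paper leaves implicit.
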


Before stating the following proposition, let us note that $\de B_a(x)>-1$ whenever $B_a(x)>0$. 

\begin{proposition}\label{prop:bounded}
Take any real $a_*$. Then 
\begin{enumerate}[(i)]
	\item $\de G_a(x)$ is bounded away from $-1$ and $\infty$ over all $(a,x)\in[1,a_*]\times(0,\infty)$; 
	\item $\de g_a(x)$ is bounded away from $-1$ and $\infty$ over all $(a,x)\in[2,a_*]\times(0,\infty)$. 
\end{enumerate} 
\end{proposition}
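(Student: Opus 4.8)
The plan is to reduce both statements to a single compactness argument for the ratio $R_a(x):=B_a(x)/\Ga(a,x)$, where $B_a$ stands for $G_a$ in part~(i) and for $g_a$ in part~(ii). Since each of these bounds is strictly positive, we have $\de B_a(x)=R_a(x)-1$, so the assertion ``$\de B_a(x)$ is bounded away from $-1$ and $\infty$'' is equivalent to ``$R_a(x)$ lies in a fixed compact subinterval of $(0,\infty)$, uniformly over the stated range of $(a,x)$.'' First I would record that $R_a(x)$ is jointly continuous and positive on $[1,a_*]\times(0,\infty)$ (resp.\ $[2,a_*]\times(0,\infty)$), which is immediate from the explicit formulas \eqref{eq:g} and \eqref{eq:g-rev} together with the positivity and continuity of $\Ga(a,x)$, $b_a$, and $\Ga(a)$ on the relevant compact $a$-interval.

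The heart of the matter is to upgrade the pointwise exactness at $x=0$ and $x=\infty$ supplied by Theorem~\ref{th:} to convergence that is \emph{uniform} in $a$. For the endpoint $x=0$: using that $\Ga(a,0)=\Ga(a)>0$ is bounded away from $0$ on the compact $a$-interval, and that $x^a\le x$ for $x\in(0,1]$ and $a\ge1$, both $B_a(x)$ and $\Ga(a,x)=\Ga(a)-\int_0^x t^{a-1}e^{-t}\,dt$ converge to $\Ga(a)$ uniformly in $a$ as $x\downarrow0$ (the incomplete piece being dominated by $\int_0^x t^{a-1}\,dt=x^a/a\le x$). Hence $R_a(x)\to1$ uniformly as $x\downarrow0$. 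For the endpoint $x=\infty$ I would pass to the normalized quantities $x^{1-a}e^x B_a(x)$ and $x^{1-a}e^x\Ga(a,x)$, whose ratio is again $R_a(x)$. The substitution $t=x+s$ gives $x^{1-a}e^x\Ga(a,x)=\int_0^\infty(1+s/x)^{a-1}e^{-s}\,ds$, and for $a\ge1$ this is squeezed by $1\le x^{1-a}e^x\Ga(a,x)\le\bigl(1-(a-1)/x\bigr)^{-1}$ once $x>a-1$ (lower bound since $(1+s/x)^{a-1}\ge1$; upper bound via $(1+s/x)^{a-1}\le e^{(a-1)s/x}$), so it tends to $1$ uniformly in $a\in[1,a_*]$. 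On the numerator side, writing $u=b_a/x$ (resp.\ $u=2/x$) one finds $x^{1-a}e^x G_a(x)=\bigl((1+u)^a-1\bigr)/(au)$ and, for $g_a$, the same principal term plus the two tails $x^{1-a}2^a/(2a)$ and $x^{1-a}\Ga(a)$, all of which tend to $1$ (resp.\ to $0$) uniformly because $b_a$ is bounded and $x^{1-a}\to0$ uniformly for $a\ge2$. Consequently $R_a(x)\to1$ uniformly as $x\to\infty$.

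With both uniform limits in hand the conclusion is routine: there exist $\delta\in(0,1)$ and $X>a_*$ with $|R_a(x)-1|\le\tfrac12$ for all admissible $a$ whenever $x\in(0,\delta]\cup[X,\infty)$, while on the compact box $[1,a_*]\times[\delta,X]$ (resp.\ $[2,a_*]\times[\delta,X]$) the continuous positive function $R_a(x)$ attains a strictly positive minimum and a finite maximum. Combining the two regions confines $R_a(x)$ to a compact subinterval $[m,M]\subset(0,\infty)$, whence $\de B_a(x)=R_a(x)-1\in[m-1,M-1]$ with $m-1>-1$ and $M-1<\infty$, as required.

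I expect the only genuine obstacle to be the uniform-in-$a$ control at $x=\infty$: the pointwise relations $\Ga(a,x)\widesim{x\to\infty}x^{a-1}e^{-x}$ and the analogous ones for $G_a$ and $g_a$ are standard, but their uniformity over the $a$-interval must be extracted from explicit two-sided estimates rather than from a bare asymptotic expansion, since the expansion coefficients themselves depend on $a$. The integral squeeze $1\le x^{1-a}e^x\Ga(a,x)\le\bigl(1-(a-1)/x\bigr)^{-1}$ is the key device that makes this uniformity transparent.
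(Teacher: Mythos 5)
Your argument is correct and follows essentially the same route as the paper: continuity and positivity on a compact region in $(a,x)$ for bounded $x$, combined with two-sided comparisons of $G_a(x)$, $g_a(x)$, and $\Ga(a,x)$ against $x^{a-1}e^{-x}$ for large $x$; in particular your squeeze $1\le x^{1-a}e^{x}\Ga(a,x)\le\bigl(1-(a-1)/x\bigr)^{-1}$ is exactly the paper's Proposition~\ref{prop:f_a}. The only cosmetic difference is that you push the endpoint analysis to a uniform limit $R_a(x)\to1$, whereas the paper settles for uniform two-sided constants, which suffices.
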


We see that, in particular, Corollary~\ref{prop:shift} and Proposition~\ref{prop:bounded}, taken together, provide 
a lower bound and an upper bound on $\Ga(a,x)$ with relative errors bounded away from $-1$ and $\infty$ uniformly over all $(a,x)\in[2,\infty)\times(0,\infty)$. 
This fully addresses the explosion phenomenon described in Proposition~\ref{prop:a->infty}. 
Of course, the trade-off when using the shifted, better bounds $G_{a;k}(x)$ and $g_{a;k}(x)$ for large $k$ is that they are more complicated than the ``original" bounds  $G_a(x)$ and $g_a(x)$.  

The following proposition provides simple, if not very precise, bounds on $\Ga(a,x)$, to be used in the proof of Proposition~\ref{prop:bounded}, which is clearly of a qualitative nature. 

\begin{proposition}\label{prop:f_a}
Take any real $a\ge1$. Then $\Ga(a,x)\ge x^{a-1}e^{-x}$ for all real $x>0$ and $\Ga(a,x)\le x^{a-1}e^{-x}/\big(1-(a-1)/x\big)$ for all real $x>a-1$. 
\end{proposition}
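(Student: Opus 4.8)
The plan is to treat the two bounds separately, using only the integral definition \eqref{eq:f} and the recursion \eqref{eq:Ga,fwd}; both reduce to the elementary observation that on the range of integration $t\ge x$ the weight $t^{a-1}$ behaves monotonically because $a-1\ge0$.

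First I would dispose of the lower bound. Since $a\ge1$, the exponent $a-1$ is nonnegative, so $t\mapsto t^{a-1}$ is nondecreasing on $(0,\infty)$; hence $t^{a-1}\ge x^{a-1}$ for all $t\ge x>0$. Substituting this into \eqref{eq:f} gives
\begin{equation*}
	\Ga(a,x)=\int_x^\infty t^{a-1}e^{-t}\,dt\ge x^{a-1}\int_x^\infty e^{-t}\,dt=x^{a-1}e^{-x},
\end{equation*}
which is exactly the claimed lower bound.

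For the upper bound I would start from the integration-by-parts recursion \eqref{eq:Ga,fwd}, namely $\Ga(a,x)=x^{a-1}e^{-x}+(a-1)\Ga(a-1,x)$, and control the remainder $\Ga(a-1,x)$. Writing $t^{a-2}e^{-t}=\tfrac1t\,t^{a-1}e^{-t}$ and using $\tfrac1t\le\tfrac1x$ for $t\ge x$, I get
\begin{equation*}
	\Ga(a-1,x)=\int_x^\infty t^{a-2}e^{-t}\,dt\le\frac1x\int_x^\infty t^{a-1}e^{-t}\,dt=\frac{\Ga(a,x)}x.
\end{equation*}
Because $a-1\ge0$, multiplying by $a-1$ preserves the direction of the inequality, so the recursion yields $\Ga(a,x)\le x^{a-1}e^{-x}+(a-1)\Ga(a,x)/x$, that is, $\big(1-(a-1)/x\big)\,\Ga(a,x)\le x^{a-1}e^{-x}$. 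For $x>a-1$ the factor $1-(a-1)/x$ is strictly positive, and dividing by it produces precisely $\Ga(a,x)\le x^{a-1}e^{-x}/\big(1-(a-1)/x\big)$.

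There is no deep obstacle here; the only point needing care is the bookkeeping of signs. The nonnegativity $a-1\ge0$ is what lets the monotone bound on $\Ga(a-1,x)$ survive multiplication by $a-1$ in the correct direction, and the restriction $x>a-1$ is exactly what keeps $1-(a-1)/x$ positive so that the final division does not reverse the inequality. An alternative route I would keep in reserve is to set $\psi(x):=x^a e^{-x}-(x-(a-1))\,\Ga(a,x)$, check via $\Ga'(a,x)=-x^{a-1}e^{-x}$ that $\psi'(x)=x^{a-1}e^{-x}-\Ga(a,x)\le0$ (the lower bound just proved), and then conclude $\psi\ge0$ from the fact that $\psi$ is nonincreasing with $\psi(x)\to0$ as $x\to\infty$; the mild extra cost of that route is justifying the limit $\psi(\infty)=0$, which the recursion approach sidesteps by avoiding the asymptotics of $\Ga(a,x)$ altogether.
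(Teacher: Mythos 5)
Your proof is correct. The lower bound is handled exactly as in the paper: replace $t^{a-1}$ by its minimum $x^{a-1}$ over $[x,\infty)$ and integrate. For the upper bound, however, you take a genuinely different route. The paper writes the integrand as $e^{h(t)}$ with $h(t)=(a-1)\ln t-t$, uses strict concavity of $h$ to bound it by the tangent-line exponential $e^{h(x)+h'(x)(t-x)}$, and integrates that decaying exponential explicitly, which is what produces the factor $1/\big(1-(a-1)/x\big)$. You instead combine the recursion \eqref{eq:Ga,fwd} with the self-referential estimate $\Ga(a-1,x)\le\Ga(a,x)/x$ (from $1/t\le1/x$ on the integration range) and solve the resulting linear inequality for $\Ga(a,x)$; the positivity of $1-(a-1)/x$ for $x>a-1$ is correctly invoked at the final division, and the sign condition $a-1\ge0$ is correctly used when multiplying through. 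Your argument is arguably more elementary, since it needs only monotonicity of $1/t$ and integration by parts rather than a convexity estimate on the log-integrand; the paper's tangent-line argument, on the other hand, yields the strict inequality directly and makes transparent where the reciprocal factor comes from (it is the exact integral of the linearized exponent). Either proof fully serves the qualitative purpose for which the proposition is used in the proof of Proposition~\ref{prop:bounded}.
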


The shift technique also allows us to fill the gaps described in Remark~\ref{rem:omissions}. 
Along with the forward shift described above, here we can use the corresponding backward shift. To obtain such a shift, let us begin by 
rewriting the forward-shift identity \eqref{eq:sum} in a ``backward'' manner: 
\begin{equation}\label{eq:back}
	\Ga(a,x)=\frac1{(a-1+k)_k}\Big(\Ga(a+k,x)-x^{a-1+k} e^{-x}\,\sum_{j=0}^{k-1}(a-1+k)_j\, x^{-j}\Big)
\end{equation} 
for all real $a$, all natural $k$ such that $(a-1+k)_k\ne0$, and all real $x>0$. Replacing here $\Ga(a+k,x)$ by a bound $B_{a+k}(x)$ on $\Ga(a+k,x)$, we obtain the  ``backward-shifted'' version of the bound $B_{a+k}(x)$: 
\begin{equation}\label{eq:B,back}
	B_{a;-k}(x):=\frac1{(a-1+k)_k}\Big(B_{a+k}(x)-x^{a-1+k} e^{-x}\,\sum_{j=0}^{k-1}(a-1+k)_j\, x^{-j}\Big).   
\end{equation}
In particular, if $(a-1+k)_k>0$, then this backward $k$-shift will transform a lower (respectively, upper) bound $B_{a+k}(x)$ on $\Ga(a+k,x)$ into a lower (respectively, upper) bound $B_{a;-k}(x)$ on $\Ga(a,x)$. Similarly, if $(a-1+k)_k<0$, then the backward $k$-shift will transform a lower (respectively, upper) bound into an upper (respectively, lower) one. 

Now we are ready to state the following propositions. 

\begin{proposition}\label{prop:g_{a;2}^lo} 
Take any real $a<1$ and recall \eqref{eq:B} and \eqref{eq:bnds,a<-1}. Then for the forward $2$-shift 
\begin{align}
	g_{a;2}^\lo(x)&=x^{a-1} e^{-x}\,\big(1+(a-1)/x)+(a-1)(a-2)\,g_{a-2}^\lo(x) \\ 
	&=\frac{e^{-x} x^a (x+3-a)}{x^2 + (4 - 2 a) x + (a - 1) (a - 2)} \label{eq:g_{a;2}^lo}
\end{align}
of the lower bound $g_{a-2}^\lo(x)$ on $\Ga(a-2,x)$ for all real $x>0$ we have 
\begin{equation}\label{eq:>g_{a;2}^lo}
	\Ga(a,x)>g_{a;2}^\lo(x), 
\end{equation}
so that $g_{a;2}^\lo(x)$ is a lower bound on $\Ga(a,x)$. 
Moreover, for each real $a<1$ the lower bound $g_{a;2}^\lo(x)$ on $\Ga(a,x)$ is exact at $x=\infty$. 
\end{proposition}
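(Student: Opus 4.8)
The plan is to read both displayed formulas off the shift machinery, to derive the inequality from Theorem~\ref{th:a<-1} together with a single sign check, and to obtain the $x\to\infty$ exactness directly from the closed form. I would first specialize the forward-shift definition \eqref{eq:B} to $k=2$ with $B_{a-2}=g_{a-2}^\lo$. Since $(a-1)_0=1$, $(a-1)_1=a-1$, and $(a-1)_2=(a-1)(a-2)$, the sum $\sum_{j=0}^{1}(a-1)_j\,x^{-j}$ equals $1+(a-1)/x$, which is exactly the first displayed line. To reach the closed form I substitute $g_{a-2}^\lo(x)=x^{a-2}e^{-x}(x-a+1)/D$, where by \eqref{eq:bnds,a<-1} applied at $a-2$ the denominator is $D=(x-a+2)^2+(a-2)$; expanding shows $D=x^2+(4-2a)x+(a-1)(a-2)$, matching the stated denominator. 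Factoring $x^{a-2}e^{-x}$ out of both terms and putting everything over $D$ leaves the numerator $(x+a-1)D+(a-1)(a-2)(x-a+1)$, which I claim collapses to $x^2(x+3-a)$; dividing by $D$ and using $x^{a-2}e^{-x}\cdot x^2=x^a e^{-x}$ then yields \eqref{eq:g_{a;2}^lo}.

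For the inequality \eqref{eq:>g_{a;2}^lo}, note that $a<1$ forces $a-2<-1$, so Theorem~\ref{th:a<-1} applies verbatim with $a$ replaced by $a-2$ and gives the strict bound $g_{a-2}^\lo(x)<\Ga(a-2,x)$. Since $a-1<0$ and $a-2<0$ for $a<1$, the coefficient $(a-1)(a-2)$ is strictly positive; multiplying the bound by it and adding $x^{a-1}e^{-x}(1+(a-1)/x)$ to both sides preserves strictness and produces $g_{a;2}^\lo(x)<x^{a-1}e^{-x}(1+(a-1)/x)+(a-1)(a-2)\Ga(a-2,x)$. By identity \eqref{eq:sum} with $k=2$ the right-hand side is exactly $\Ga(a,x)$, which is the desired inequality. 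This is precisely the instance of the general principle recorded after \eqref{eq:B}, namely that a forward $k$-shift with $(a-1)_k>0$ sends lower bounds to lower bounds; I invoke it through the elementary sign argument rather than through Proposition~\ref{prop:taming}, whose hypothesis $a\ge k$ fails here since $a<1<2$.

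Exactness at $x=\infty$ I would read directly off the closed form: as $x\to\infty$ the numerator satisfies $x^a(x+3-a)\sim x^{a+1}$ while $D\sim x^2$, so $g_{a;2}^\lo(x)\sim x^{a-1}e^{-x}$, which is the leading term of $\Ga(a,x)$; hence $g_{a;2}^\lo(x)\sim\Ga(a,x)$. Equivalently, since $g_{a-2}^\lo$ is exact at $\infty$ by Theorem~\ref{th:a<-1}, identity \eqref{eq:De} gives $\De g_{a;2}^\lo(x)=(a-1)(a-2)\,\De g_{a-2}^\lo(x)=o\big((a-1)(a-2)\Ga(a-2,x)\big)$, and $(a-1)(a-2)\Ga(a-2,x)\sim(a-1)(a-2)x^{a-3}e^{-x}=o(x^{a-1}e^{-x})\sim o(\Ga(a,x))$, so $\de g_{a;2}^\lo(x)\to0$.

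The only genuine computation is the polynomial cancellation in the numerator, and this is the step I would carry out most carefully. Writing $c:=a-1$ keeps the bookkeeping transparent: the numerator becomes $(x+c)\big(x^2+(2-2c)x+c(c-1)\big)+c(c-1)(x-c)$, whose degree-one and degree-zero terms cancel, leaving $x^3+(2-c)x^2=x^2(x+3-a)$. Everything else reduces to tracking the sign of $(a-1)(a-2)$ and to one-line asymptotics, so this expansion — routine but the place where a sign or coefficient slip would be easy — is the main obstacle.
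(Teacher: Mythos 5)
Your proposal is correct and follows essentially the same route as the paper: the inequality comes from applying the first inequality of Theorem~\ref{th:a<-1} at $a-2$ and pushing it through the forward $2$-shift with the positive coefficient $(a-1)(a-2)$, and exactness at $x=\infty$ comes from the closed form \eqref{eq:g_{a;2}^lo} together with Lemma~\ref{lem:infty}. You merely spell out the details the paper leaves implicit (the polynomial cancellation verifying \eqref{eq:g_{a;2}^lo}, and the correct observation that one must use the sign discussion after \eqref{eq:B} rather than Proposition~\ref{prop:taming}, whose hypothesis $a\ge k$ fails here), and your algebra checks out.
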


\begin{proposition}\label{prop:G_{a;-1}} 
Take any $a\in(-2,1)$, and recall \eqref{eq:B,back}, \eqref{eq:g}, and \eqref{eq:-1 le a<1}. If $a\ne0$, consider the backward $1$-shift 
\begin{align}
	G_{a;-1}(x)&=\tfrac1a\,\big(G_{a+1}(x) - x^a e^{-x}\big)  \label{eq:eq,G_{a;-1}}
\end{align}
of the bound $G_{a+1}(x)$ on $\Ga(a+1,x)$. Define $G_{0;-1}(x)$ by continuity: 
\begin{align}\label{eq:0,G_{a;-1}}
	G_{0;-1}(x)&:=\lim_{a\to0}G_{a;-1}(x) 
	=e^{-x} \Big[\Big(1+\frac{x}{b_1}\Big) \ln \Big(1+\frac{b_1}{x}\Big)-1\Big]. 
\end{align}
Then 
for all real $x>0$ 
\begin{equation}\label{eq:ineq,G_{a;-1}}
	\Ga(a,x)>G_{a;-1}(x), 
\end{equation}
so that $G_{a;-1}(x)$ is a lower bound on $\Ga(a,x)$. 
Moreover, for each $a\in(-2,1)$ the lower bound $G_{a;-1}(x)$ on $\Ga(a,x)$ is exact at $x=0$. 
\end{proposition}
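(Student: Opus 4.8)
The plan is to reduce the claim to Theorem~\ref{th:} by means of the backward-shift identity. Taking $k=1$ in \eqref{eq:back} gives $\Ga(a,x)=\frac1a\big(\Ga(a+1,x)-x^a e^{-x}\big)$ for $a\ne0$ and $x>0$; subtracting from this the definition \eqref{eq:eq,G_{a;-1}} of $G_{a;-1}(x)$ makes the two $x^a e^{-x}$ terms cancel, leaving the clean identity
\[
	\Ga(a,x)-G_{a;-1}(x)=\tfrac1a\big(\Ga(a+1,x)-G_{a+1}(x)\big)\qquad(a\ne0).
\]
Everything now hinges on the sign of $\Ga(a+1,x)-G_{a+1}(x)$ together with the sign of $a$. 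For $a\in(-2,0)$ we have $a+1\in(-1,1)$, so \eqref{eq:-1 le a<1} applied at the value $a+1$ gives $\Ga(a+1,x)<G_{a+1}(x)$, a negative bracket, while $1/a<0$; the product is positive and \eqref{eq:ineq,G_{a;-1}} follows. For $a\in(0,1)$ we have $a+1\in(1,2)$, so \eqref{eq:1<a<2} gives $G_{a+1}(x)<\Ga(a+1,x)$, a positive bracket, while $1/a>0$; again the product is positive. This settles all $a\ne0$.

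The case $a=0$ is the main obstacle, precisely because the bracket $\Ga(a+1,x)-G_{a+1}(x)$ vanishes there: by \eqref{eq:a=1}, $\Ga(1,x)=G_1(x)=e^{-x}$, which is also why $G_{0;-1}$ must be defined by the $0/0$ limit \eqref{eq:0,G_{a;-1}}. Writing $h(a):=\Ga(a,x)-G_a(x)$ for fixed $x>0$ and using $h(1)=0$, the identity above yields, as $a\to0$,
\[
	\Ga(0,x)-G_{0;-1}(x)=\lim_{a\to0}\frac{h(a+1)}{a}=h'(1).
\]
Theorem~\ref{th:} tells us $h<0$ on $(-1,1)$, $h(1)=0$, and $h>0$ on $(1,2)$, which immediately gives $h'(1)\ge0$ and hence the non-strict inequality $\Ga(0,x)\ge G_{0;-1}(x)$. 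The delicate point is to upgrade this to the strict inequality in \eqref{eq:ineq,G_{a;-1}}, i.e.\ to show that $a=1$ is a \emph{simple} zero of $h(\cdot)$, so that $h'(1)>0$. I would establish this by computing the two derivatives explicitly: $\partial_a\Ga(a,x)|_{a=1}=\int_x^\infty(\ln t)\,e^{-t}\,dt$ is elementary, while $\partial_a G_a(x)|_{a=1}$ requires differentiating the $a$-dependent quantity $b_a$ from \eqref{eq:b} inside \eqref{eq:g}, and then checking that the difference stays positive for every $x>0$ is the technical heart of this case.

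For the exactness at $x=0$ I would match leading asymptotics as $x\downarrow0$, splitting on the sign of $a$. If $a\in(0,1)$, then $x^a e^{-x}\to0$ and $G_{a+1}(x)\to b_{a+1}^{\,a}/(a+1)=\Ga(a+2)/(a+1)$, so by $\Ga(a+2)=a(a+1)\Ga(a)$ one gets $G_{a;-1}(x)\to\Ga(a)=\lim_{x\downarrow0}\Ga(a,x)$, a finite nonzero limit, so the ratio tends to $1$. If $a\in(-2,0)$, then $x^a\to\infty$ and dominates the slower growth of $G_{a+1}(x)$ (the constant limit when $a\in(-1,0)$, the $x^{a+1}$-term when $a\in(-2,-1)$, with $a=-1$ handled through the logarithmic branch of \eqref{eq:g}); hence $G_{a;-1}(x)\sim-\tfrac1a x^a=x^a/|a|\sim\Ga(a,x)$. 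Finally, for $a=0$, formula \eqref{eq:0,G_{a;-1}} gives $G_{0;-1}(x)\sim\ln(1/x)\sim\Ga(0,x)$ as $x\downarrow0$. In every case $G_{a;-1}(x)/\Ga(a,x)\to1$, which is the asserted exactness at $x=0$.
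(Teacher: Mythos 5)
Your treatment of the case $a\ne0$ is correct and coincides with the paper's argument: the backward-shift identity with $k=1$ reduces \eqref{eq:ineq,G_{a;-1}} to the sign of $\tfrac1a\big(\Ga(a+1,x)-G_{a+1}(x)\big)$, which is settled by \eqref{eq:-1 le a<1} when $a\in(-2,0)$ and by \eqref{eq:1<a<2} when $a\in(0,1)$. The exactness at $x=0$ is also handled correctly (the paper does the same by citing Lemma~\ref{lem:0} together with \eqref{eq:g} and \eqref{eq:0,G_{a;-1}}).

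The genuine gap is the strict inequality at $a=0$. You correctly observe that $\Ga(0,x)-G_{0;-1}(x)=h'(1)$ with $h(a):=\Ga(a,x)-G_a(x)$, and that the sign change of $h$ across $a=1$ only gives $h'(1)\ge0$ (the paper likewise notes that the non-strict inequality follows ``by continuity''). But then you stop: declaring that checking $h'(1)>0$ for every $x>0$ ``is the technical heart of this case'' is an accurate description of what remains, not a proof of it. Since $\Ga(0,x)-G_{0;-1}(x)=h'(1)$ is an exact identity, proving $h'(1)>0$ for all $x>0$ is not a reduction at all --- it is a verbatim restatement of \eqref{eq:ineq,G_{a;-1}} for $a=0$. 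The paper devotes essentially its entire proof of this proposition to exactly this point: it applies Proposition~A to $f=\Ga(0,\cdot)$ and $g=G_{0;-1}$, computes the derivative ratio $\rho=g'/f'$ through its third derivative, uses $1<b_1<2$ (Lemma~\ref{prop:b}) to show $\rho'''$ changes sign once from $+$ to $-$, deduces that $\rho$ is strictly concave-convex and hence, via Lemma~\ref{lem:rho''} and $\rho(\infty-)=b_1/2\in\R$, increasing-decreasing, concludes that $r=g/f$ is increasing-decreasing, and finally computes $r'(0+)=1/b_1-1<0$ to see that $r$ is in fact decreasing from $r(0+)=1$, whence $r<1$. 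Your alternative route --- explicitly computing $\partial_a G_a(x)\big|_{a=1}$, which involves $\tfrac{d}{da}b_a\big|_{a=1}$, and then proving the resulting function of $x$ stays below $\int_x^\infty e^{-t}\ln t\,dt$ for all $x>0$ --- is plausible in principle but is at least as delicate as the paper's argument, and none of it is carried out. As written, the $a=0$ case of \eqref{eq:ineq,G_{a;-1}} is unproved.
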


\begin{proposition}\label{prop:G_{a;1}} 
Take any $a\in(1,3)$, and recall \eqref{eq:B}, \eqref{eq:g}, and Theorem~\ref{th:}. Then for the forward $1$-shift 
\begin{align*}
	G_{a;1}(x)&=x^{a-1} e^{-x}+(a-1)\,G_{a-1}(x) 
\end{align*}
of the bound $G_{a-1}(x)$ on $\Ga(a-1,x)$ for all real $x>0$ we have 
\begin{align}
	\Ga(a,x)<G_{a;1}(x)\quad &\text{if } 1<a<2, \label{eq:1<a<2,up} \\ 
\Ga(a,x)=	G_{a;1}(x)\quad &\text{if } a=2, \label{eq:a=2}\\  
	\Ga(a,x)>G_{a;1}(x)\quad &\text{if } 2<a<3. \label{eq:2<a<3,lo}
\end{align} 
Moreover, for each $a\in(1,3)$ the bound $G_{a;1}(x)$ on $\Ga(a,x)$ is exact both at $x=0$ and at $x=\infty$. 
\end{proposition}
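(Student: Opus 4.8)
The plan is to derive the entire proposition from the shift identities already in place, specialized to the index $a-1$ and step $k=1$; no new computation with the explicit form of $G_a$ should be needed. First I would observe that $G_{a;1}$ is precisely the forward $1$-shift of $G_{a-1}$, i.e.\ the instance $B=G$, $k=1$ of \eqref{eq:B}, so that the error identity \eqref{eq:De} yields $\De G_{a;1}(x)=(a-1)\,\De G_{a-1}(x)$ for every $x>0$. Because $a\in(1,3)$ forces $a-1>0$, the factor $a-1$ preserves signs, and the three inequalities reduce to reading off the sign of $\De G_{a-1}(x)=G_{a-1}(x)-\Ga(a-1,x)$ from Theorem~\ref{th:} at the index $a-1\in(0,2)$.

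Concretely, for $1<a<2$ the index $a-1$ lies in $(0,1)$, so \eqref{eq:-1 le a<1} gives $\De G_{a-1}(x)>0$ and hence \eqref{eq:1<a<2,up}; for $a=2$ the index equals $1$, so \eqref{eq:a=1} gives $\De G_{a-1}(x)\equiv0$ and hence \eqref{eq:a=2}; and for $2<a<3$ the index $a-1$ lies in $(1,2)$, so \eqref{eq:1<a<2} gives $\De G_{a-1}(x)<0$ and hence \eqref{eq:2<a<3,lo}. In particular this fills Gaps~2 and~3 of Remark~\ref{rem:omissions}, supplying an upper bound for $a\in(1,2)$ and a lower bound for $a\in(2,3)$.

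For the exactness assertions I would invoke Proposition~\ref{prop:taming} rather than compute asymptotics directly. Since $a-1\ge0$, Theorem~\ref{th:} guarantees that $G_{a-1}$ is exact at both $x=0$ and $x=\infty$, i.e.\ $\de G_{a-1}(x)\to0$ as $x\downarrow0$ and as $x\to\infty$. As $a\ge1=k$, Proposition~\ref{prop:taming} applies with $k=1$: for $1<a<2$, where $G_{a-1}$ is an upper bound, part~(ii) gives $0\le\de G_{a;1}(x)\le\de G_{a-1}(x)$, while for $2<a<3$, where $G_{a-1}$ is a lower bound, part~(i) gives $\de G_{a-1}(x)\le\de G_{a;1}(x)\le0$. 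In either regime $\de G_{a;1}(x)$ is squeezed between $0$ and $\de G_{a-1}(x)$, so it tends to $0$ at both endpoints, which is exactly the claimed exactness; the case $a=2$ is immediate since there $G_{a;1}=\Ga(2,\cdot\,)$.

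I do not anticipate a genuine obstacle: the statement is essentially a corollary of \eqref{eq:De}, Theorem~\ref{th:}, and Proposition~\ref{prop:taming}. The only points requiring care are bookkeeping ones -- verifying that $a-1$ lands in the correct subinterval of Theorem~\ref{th:}, that the positivity of $a-1$ keeps rather than reverses the sign in \eqref{eq:De}, and that the lower-bound versus upper-bound clause of Proposition~\ref{prop:taming} is matched correctly so that the squeeze lies on the proper side of $0$. If one preferred to avoid citing Proposition~\ref{prop:taming}, the same exactness could be obtained directly by writing $\de G_{a;1}(x)=\tfrac{(a-1)\,\Ga(a-1,x)}{\Ga(a,x)}\,\de G_{a-1}(x)$ and using \eqref{eq:sum} to control the prefactor, but the taming proposition makes this unnecessary.
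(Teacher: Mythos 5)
Your proposal is correct and takes essentially the same route as the paper, which also derives the three sign statements and the exactness claims from Proposition~\ref{prop:taming} with $k=1$ (equivalently, the error identity \eqref{eq:De}) combined with the corresponding cases \eqref{eq:-1 le a<1}, \eqref{eq:a=1}, \eqref{eq:1<a<2} and the exactness assertions of Theorem~\ref{th:} at the index $a-1$. The only difference is that you spell out the squeeze $0\le\de G_{a;1}\le\de G_{a-1}$ (resp.\ its mirror image) explicitly, which the paper leaves implicit.
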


\begin{proposition}\label{prop:g_{a;1}^lo} 
Take any real $a<0$. Then for the forward $1$-shift 
\begin{align}\label{eq:eq,g_{a;1}^lo}
	g_{a;1}^\lo(x)&=x^{a-1} e^{-x}+(a-1)\,g_{a-1}^\lo(x) 
	=\frac{e^{-x} x^a (1-a+x)}{(x-a)^2-a+2 x} 
\end{align}
of the lower bound $g_{a-1}^\lo(x)$ on $\Ga(a-1,x)$ for all real $x>0$ we have 
\begin{equation}\label{eq:ineq,g_{a;1}^lo}
	\Ga(a,x)<g_{a;1}^\lo(x)<g_a^\hi(x), 
\end{equation}
so that $g_{a;1}^\lo(x)$ is an upper bound on $\Ga(a,x)$, which is an improvement of the upper bound $g_a^\hi(x)$ on $\Ga(a,x)$. 
Moreover, for each real $a<0$ the upper bound $g_{a;1}^\lo(x)$ on $\Ga(a,x)$ is exact both at $x=0$ and at $x=\infty$. 
\end{proposition}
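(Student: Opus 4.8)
The plan is to derive everything from the forward-shift identity \eqref{eq:Ga,fwd} together with the bounds supplied by Theorem~\ref{th:a<-1}, which apply here precisely because the hypothesis $a<0$ forces $a-1<-1$; thus $g_{a-1}^\lo(x)$ is a legitimate strict lower bound on $\Ga(a-1,x)$. First I would verify the closed form, i.e.\ the second equality in \eqref{eq:eq,g_{a;1}^lo}. Replacing $a$ by $a-1$ in the definition of $g_a^\lo(x)$ from Theorem~\ref{th:a<-1} gives $g_{a-1}^\lo(x)=x^{a-1}e^{-x}(x-a)/\big((x-a+1)^2+a-1\big)$; substituting this into $g_{a;1}^\lo(x)=x^{a-1}e^{-x}+(a-1)g_{a-1}^\lo(x)$ and clearing the common factor $x^{a-1}e^{-x}$ collapses the combined numerator to $x(x+1-a)$ over the denominator $(x-a+1)^2+a-1=(x-a)^2-a+2x$, which is the asserted expression. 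This is routine algebra.

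Next, to prove the upper-bound inequality $\Ga(a,x)<g_{a;1}^\lo(x)$ in \eqref{eq:ineq,g_{a;1}^lo}, I would use \eqref{eq:Ga,fwd} in the form $\Ga(a,x)=x^{a-1}e^{-x}+(a-1)\Ga(a-1,x)$. Since $g_{a-1}^\lo(x)<\Ga(a-1,x)$ and the multiplier $a-1<0$ reverses this strict inequality, we obtain $(a-1)g_{a-1}^\lo(x)>(a-1)\Ga(a-1,x)$; adding $x^{a-1}e^{-x}$ to both sides yields exactly $g_{a;1}^\lo(x)>\Ga(a,x)$. This is just the specialization to $k=1$ of the sign rule recorded after \eqref{eq:B}.

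For the second inequality $g_{a;1}^\lo(x)<g_a^\hi(x)$, I would first record positivity of all relevant quantities for $a<0$, $x>0$: namely $x+1-a>0$, $x-a>0$, and the denominator $(x-a)^2-a+2x$ is a sum of positive terms, hence $>0$. Cross-multiplying (legitimate since both denominators are positive) reduces the claim to $(x+1-a)(x-a)<(x-a)^2-a+2x$; writing the left side as $(x-a)^2+(x-a)$ makes the $(x-a)^2$ terms cancel and collapses the inequality to $x-a<2x-a$, i.e.\ $0<x$, which holds. This clean collapse is the step I would emphasize.

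Finally, exactness at both endpoints follows by squeezing and requires no separate asymptotic analysis. Having established $\Ga(a,x)<g_{a;1}^\lo(x)<g_a^\hi(x)$ and noting $\Ga(a,x)>0$, division gives $1<g_{a;1}^\lo(x)/\Ga(a,x)<g_a^\hi(x)/\Ga(a,x)$; since Theorem~\ref{th:a<-1} asserts (for all $a<0$) that $g_a^\hi(x)$ is exact both at $x=0$ and at $x=\infty$, the right-hand ratio tends to $1$ at each endpoint, and the squeeze forces $g_{a;1}^\lo(x)/\Ga(a,x)\to1$ there as well. I do not anticipate a genuine obstacle: the inequalities reduce to the triviality $0<x$, and exactness is immediate from the sandwich; the only mildly delicate points are the algebraic simplification in the first step and the verification of denominator positivity, both entirely routine.
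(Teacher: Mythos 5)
Your proposal is correct and follows essentially the same route as the paper: the first inequality via the forward-shift identity \eqref{eq:Ga,fwd} and the sign reversal from the factor $a-1<0$, the second inequality by the algebraic comparison with $g_a^\hi(x)$ (your cross-multiplication is the same computation as the paper's explicit difference $g_a^\hi(x)-g_{a;1}^\lo(x)=x^{1+a}e^{-x}/\bigl((x-a)\bigl((x-a)^2-a+2x\bigr)\bigr)>0$), and exactness at both endpoints by squeezing against $g_a^\hi(x)$. No gaps.
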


Propositions~\ref{prop:g_{a;2}^lo}--\ref{prop:g_{a;1}^lo} fill the four gaps listed in Remark~\ref{rem:omissions}. In particular, inequalities \eqref{eq:1<a<2,up} and  \eqref{eq:2<a<3,lo} in Proposition~\ref{prop:G_{a;1}} cover Gaps~2 and 3, respectively, whereas Proposition~\ref{prop:g_{a;1}^lo} covers Gap~4. 
Finally, Gap~1 is covered by the following immediate corollary of Propositions~\ref{prop:g_{a;2}^lo} and \ref{prop:G_{a;-1}}:

\begin{corollary}\label{cor:-1 le a<1}
Take any $a\in(-2,1)$. 
Then 
for all real $x>0$ 
\begin{equation*}
	\Ga(a,x)>h_a(x):=G_{a;-1}(x)\vee g_{a;2}^\lo(x), 
\end{equation*}
so that $h_a(x)$ is a lower bound on $\Ga(a,x)$. 
Moreover, for each $a\in(-2,1)$ the lower bound $h_a(x)$ on $\Ga(a,x)$ is exact both at $x=0$ and at $x=\infty$. 
\end{corollary}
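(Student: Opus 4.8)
\emph{Proof proposal.} The plan is to treat the two assertions of Corollary~\ref{cor:-1 le a<1} separately, in each case reducing everything to the two ingredient propositions together with the elementary observation that a pointwise maximum of lower bounds is again a lower bound. Note first that $a\in(-2,1)$ lies in the range covered by each of Propositions~\ref{prop:g_{a;2}^lo} and \ref{prop:G_{a;-1}}: the former applies for all $a<1$, the latter for $a\in(-2,1)$, so their common range is exactly $(-2,1)$.

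First I would verify the inequality $\Ga(a,x)>h_a(x)$. By Proposition~\ref{prop:G_{a;-1}} we have $\Ga(a,x)>G_{a;-1}(x)$, and by Proposition~\ref{prop:g_{a;2}^lo} we have $\Ga(a,x)>g_{a;2}^\lo(x)$, both for all $x>0$. Since $h_a(x)=G_{a;-1}(x)\vee g_{a;2}^\lo(x)$ equals whichever of the two bounds is larger at the given $x$, and each of them is strictly below $\Ga(a,x)$, the maximum is strictly below $\Ga(a,x)$ as well; that is, $\Ga(a,x)>h_a(x)$.

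For the exactness I would run a two-sided squeeze, exploiting that the two bounds are exact at opposite endpoints. For every $x>0$ the integral representation \eqref{eq:f} gives $\Ga(a,x)>0$, so dividing the chain $G_{a;-1}(x)\le h_a(x)<\Ga(a,x)$ by $\Ga(a,x)$ yields $G_{a;-1}(x)/\Ga(a,x)\le h_a(x)/\Ga(a,x)<1$. By Proposition~\ref{prop:G_{a;-1}} the left-hand side tends to $1$ as $x\downarrow0$, so the squeeze forces $h_a(x)/\Ga(a,x)\to1$; hence $h_a$ is exact at $x=0$. Symmetrically, from $g_{a;2}^\lo(x)\le h_a(x)<\Ga(a,x)$ together with the exactness of $g_{a;2}^\lo$ at $x=\infty$ (Proposition~\ref{prop:g_{a;2}^lo}), the same squeeze gives $h_a(x)/\Ga(a,x)\to1$ as $x\to\infty$, so $h_a$ is exact at $x=\infty$ too.

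There is essentially no serious obstacle here, which is precisely why the paper labels the statement an immediate corollary: all the analytic work has already been done in establishing Propositions~\ref{prop:g_{a;2}^lo} and \ref{prop:G_{a;-1}}, and the combination step merely records that the pointwise maximum inherits the lower-bound property globally and the asymptotic exactness from whichever constituent is exact at each endpoint. The only point requiring any care is the positivity of $\Ga(a,x)$, needed so that division preserves the inequalities in the squeeze, and this is clear from \eqref{eq:f}.
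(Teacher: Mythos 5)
Your proposal is correct and is precisely the argument the paper intends: the paper offers no written proof, calling the statement an ``immediate corollary'' of Propositions~\ref{prop:g_{a;2}^lo} and \ref{prop:G_{a;-1}}, and your two steps (the maximum of two strict lower bounds is a strict lower bound; the squeeze $G_{a;-1}\le h_a<\Ga(a,\cdot)$ near $0$ and $g_{a;2}^\lo\le h_a<\Ga(a,\cdot)$ near $\infty$, using $\Ga(a,x)>0$) are exactly the omitted details.
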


The drawback of the bound $h_a(x)$ on $\Ga(a,x)$ in Corollary~\ref{cor:-1 le a<1} is that, in contrast with all the other bounds on $\Ga(a,x)$ given in this paper, the bound $h_a$ is not a real-analytic function, but rather the maximum of two real-analytic functions, $G_{a;-1}$ and $g_{a;2}^\lo$. 

\medskip
\hrule
\medskip

\begin{figure}[h]%
\includegraphics[width=1\columnwidth]{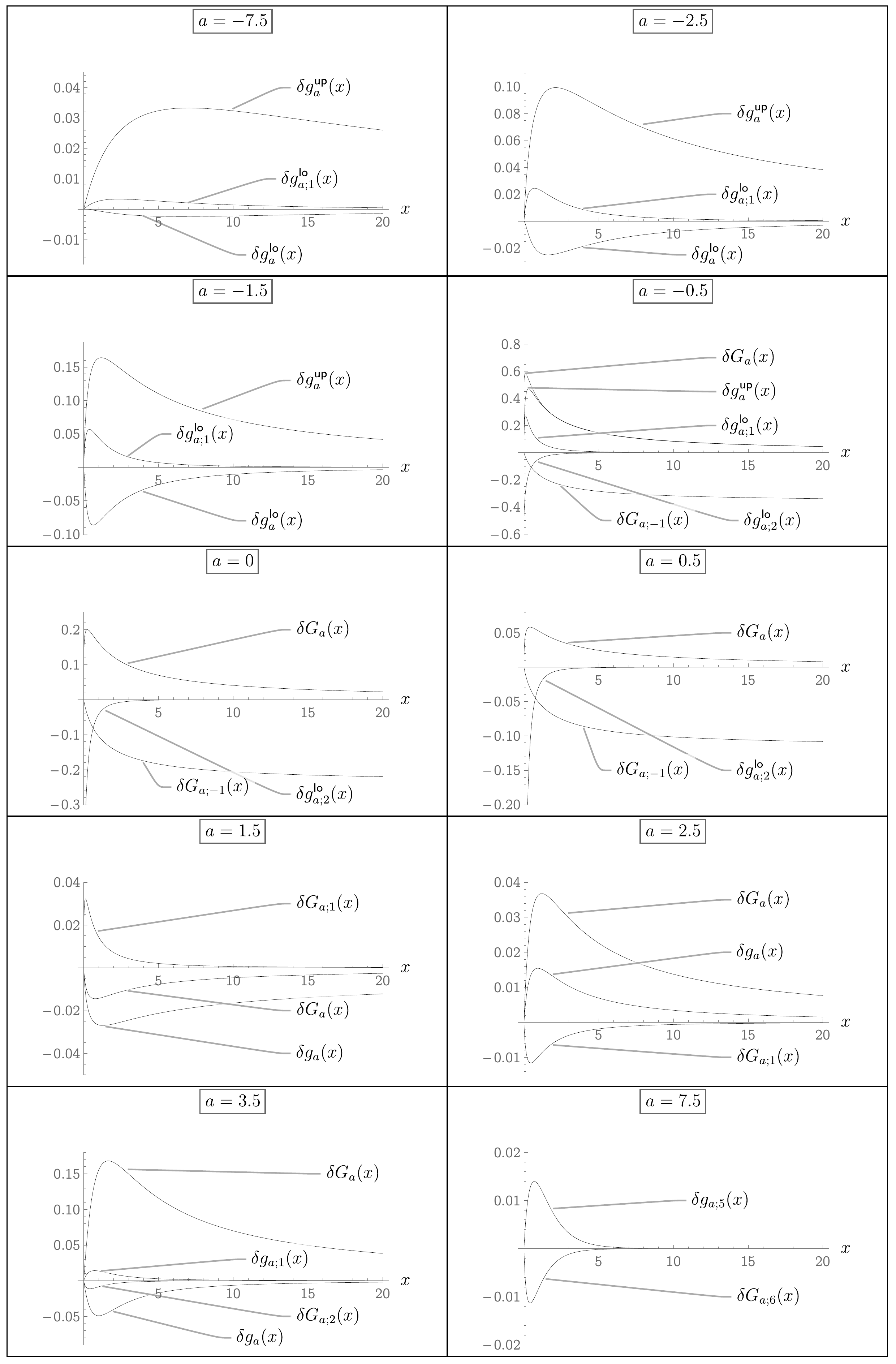}%
\caption{Graphs of signed relative errors of bounds on $\Ga(a,x)$.}%
\label{fig:grid}%
\end{figure}


%
%
%
%

Figure~\ref{fig:grid} shows graphs of the signed relative errors of various bounds on $\Ga(a,x)$ presented above for selected values of $a$, namely, for $a\in\{-7.5,-2.5,-1.5,-0.5,0,\break 
0.5,1.5,2.5,3.5, 7.5\}$. 



\section{Proofs}
\label{proofs}


The proofs are based mainly on the following ``
special-case l'Hospital-type rules for monotonicity'' given in \cite[Propositions~4.1 and 4.3]{pin06}: 

\begin{propA}\label{prop:lhosp}
Let $-\infty\le A<B\le\infty$. Let $f$ and $g$ be differentiable functions defined on the interval $(A,B)$ such that the functions $f$ and $f'$ do not take on the zero value
and do not change their respective signs on $(A,B)$. Suppose also that $f(A+)=g(A+)=0$ or $f(B-)=g(B-)=0$. 
Consider the ratio $r:=g/f$ and the ``derivative'' ratio $\rho:=g'/f'$. Then we have the following: 
\begin{enumerate}[(i)]
	\item If $\rho$ is increasing or decreasing on $(A,B)$, then $r$ is so as well, respectively. 
\item If $\rho$ is increasing-decreasing or decreasing-increasing on $(A,B)$, 
then $r$ is so as well, respectively.
\end{enumerate} 
\end{propA}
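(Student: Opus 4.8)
The plan is to reduce everything to the sign of the single function $S:=\rho-r$ and then to count how often $S$ changes sign. First I would record the elementary identity that drives the whole argument. Since $g'=\rho f'$, we have $g'f-gf'=f'(\rho f-g)=f'f\,(\rho-r)$, and therefore
\[
r'=\frac{g'f-gf'}{f^2}=\frac{f'}{f}\,(\rho-r)=\frac{f'}{f}\,S.
\]
Because neither $f$ nor $f'$ vanishes or changes sign on $(A,B)$, the factor $f'/f$ has one constant sign $\sigma\in\{-1,+1\}$; hence $\sign r'=\sigma\,\sign S$ at every point, so the monotonicity pattern of $r$ is exactly the $\sigma$-twisted sign pattern of $S$. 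Replacing $(f,g)$ by $(-f,-g)$ changes neither $r$ nor $\rho$, so I may assume $f>0$; then the endpoint condition fixes $\sigma$, since $f(A+)=0$ forces $f'>0$ (so $\sigma=+1$) and $f(B-)=0$ forces $f'<0$ (so $\sigma=-1$). Thus it suffices to analyze the sign changes of $S$, and I shall treat in detail the case $f(A+)=g(A+)=0$ with $\sigma=+1$, the case $f(B-)=g(B-)=0$ being entirely symmetric.

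Next I would pin down the sign of $S$ near the endpoint and on any interval where $\rho$ is monotone, using Cauchy's mean value theorem. For fixed $x$ and $\alpha\in(A,x)$ there is $\xi_\alpha\in(\alpha,x)$ with $\frac{g(x)-g(\alpha)}{f(x)-f(\alpha)}=\rho(\xi_\alpha)$; letting $\alpha\downarrow A$ and using $f(A+)=g(A+)=0$ gives $r(x)=\lim_{\alpha\downarrow A}\rho(\xi_\alpha)$ with every $\xi_\alpha<x$. Consequently, on any initial interval on which $\rho$ is increasing one gets $r(x)\le\rho(x)$, i.e.\ $S\ge0$, while on an initial interval on which $\rho$ is decreasing one gets $S\le0$. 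In particular, in case (i) with $\rho$ increasing throughout, $S\ge0$ on all of $(A,B)$, whence $r'\ge0$ and $r$ is increasing; the decreasing subcase and the parallel statements for $\sigma=-1$ are identical.

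The heart of the matter is case (ii), where I must show that $S$ has at most one sign change, of the correct type. Suppose $\rho$ is increasing on $(A,c)$ and decreasing on $(c,B)$. By the previous paragraph $S\ge0$ on $(A,c]$, so $r$ is increasing there. On the decreasing part $(c,B)$ I claim that once $S\le0$ it can never return to positive values; granting this, $S$ is $+$ then $-$ on $(c,B)$, so $r$ is increasing-then-decreasing overall, as required. To prove the claim I use a local no-return argument rather than differentiating $\rho$ (which is only assumed monotone): if $x_0\in(c,B)$ satisfies $S(x_0)=0$ and $S>0$ on some $(x_0,x_0+\delta)$, then $r'=\frac{f'}{f}S>0$ there, so $r(x)>r(x_0)=\rho(x_0)\ge\rho(x)$ for $x\in(x_0,x_0+\delta)$ (using that $\rho$ is nonincreasing past $c$), whence $S(x)=\rho(x)-r(x)<0$, a contradiction. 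A short supremum argument then upgrades this to: if $S(x_0)\le0$ for some $x_0\in(c,B)$, then $S\le0$ on all of $[x_0,B)$. This rules out any up-crossing on $(c,B)$ and establishes the single-sign-change structure. The decreasing-increasing pattern, and the versions with $\sigma=-1$, follow by the same argument after the obvious reflection of the interval.

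I expect the main obstacle to be exactly this last step: making the no-up-crossing dichotomy rigorous using only the monotonicity (not the differentiability) of $\rho$, together with the careful bookkeeping that matches each endpoint/sign configuration (whether $f(A+)=0$ or $f(B-)=0$, and whether $\sigma=+1$ or $\sigma=-1$) to the correct increasing/decreasing conclusion for $r$. Strictness of the conclusion, when $\rho$ is strictly monotone, follows by feeding the same local no-return argument back in at any interior zero of $S$, so I would treat it as a brief addendum rather than a separate case.
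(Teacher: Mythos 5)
The paper does not prove Proposition~A; it is imported verbatim from \cite[Propositions 4.1 and 4.3]{pin06}. Your argument is correct and is essentially the standard proof given there: the identity $r'=\frac{f'}{f}(\rho-r)$ reduces everything to the sign of $\rho-r$, the Cauchy mean value theorem pins that sign down from the vanishing endpoint, and the no-return/supremum step (which needs only the monotonicity of $\rho$ on the relevant subinterval together with the continuity of $r$) yields the single sign change; the strictness addendum is also fine, most cleanly phrased as ``if $r$ were constant on a nondegenerate subinterval then $\rho-r\equiv0$ there, forcing $\rho$ to be constant, contradicting strict monotonicity.''
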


Here we say that a function $r$ on $(A,B)$ is increasing-decreasing if there is some point $C\in[A,B]$ such that $r$ is 
increasing on $(A,C)$ and 
decreasing on $(C,B)$. The term ``decreasing-increasing'' is defined similarly, so that $r$ is decreasing-increasing if and only if the function $-r$ is increasing-decreasing. In particular, if $r$ is increasing or decreasing on the entire interval $(A,B)$, then $r$ is both increasing-decreasing and decreasing-increasing on $(A,B)$. 

In this paper the terms ``increasing" and ``decreasing" are understood in the strict sense: namely, as ``strictly increasing" and ``strictly decreasing", respectively. 

General versions of this special l'Hospital-type rule for monotonicity, without the assumption that $f(A+)=g(A+)=0$ or $f(B-)=g(B-)=0$ are also known; see again \cite{pin06} and references therein, especially \cite{pin01}. 

Next, let us say that a function $h\colon(0,\infty)\to\R$ is strictly concave-convex if, for some $c\in(0,\infty)$, the function $h$ is strictly concave on $(0,c]$ and strictly convex on $[c,\infty)$. Let us say that $h$ is strictly convex-concave if $-h$ is strictly concave-convex. 

\begin{lemma}\label{lem:rho''} 
Let a function $h\colon(0,\infty)\to\R$ be such that $h(\infty-)\in\R$. Then, if $h$ is strictly concave-convex, then $h$ is increasing-decreasing; if $h$ is strictly convex-concave, then $h$ is decreasing-increasing.  
\end{lemma}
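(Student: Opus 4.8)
The plan is to prove the concave-convex statement and then obtain the convex-concave statement for free by applying it to $-h$: if $h$ is strictly convex-concave, then $-h$ is strictly concave-convex with $(-h)(\infty-)=-h(\infty-)\in\R$, so once we know $-h$ is increasing-decreasing we conclude (by the very definition of ``decreasing-increasing'' recorded just before the lemma) that $h$ is decreasing-increasing. Thus I only need to show the following: if $h$ is strictly concave on $(0,c]$, strictly convex on $[c,\infty)$, and $h(\infty-)\in\R$, then there is some $C\in[0,\infty]$ with $h$ increasing on $(0,C)$ and decreasing on $(C,\infty)$.

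The crucial step, and the one I expect to be the main obstacle, is to show that $h$ is strictly decreasing on the whole convex tail $[c,\infty)$; this is exactly where the finiteness of $h(\infty-)$ is used. The idea is that a strictly convex function lies above each of its supporting lines, so if $h$ failed to be nonincreasing across some pair $c\le x_1<x_2$ (equivalently, if a one-sided derivative of $h$ were $\ge0$ at some point of $[c,\infty)$), then convexity would produce a supporting line of positive slope, along which $h(x)\to+\infty$ as $x\to\infty$, contradicting $h(\infty-)\in\R$. Hence all secant slopes of $h$ on $[c,\infty)$ are negative, and strict convexity upgrades ``nonincreasing'' to ``strictly decreasing''. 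The one technical point needing care is that $h$ is not assumed differentiable; I would run the argument with one-sided derivatives (which exist and are monotone for convex and for concave functions) or directly with secant slopes, using that $h$ is continuous on the open interval.

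It then remains to analyze the concave part and glue the pieces. On $(0,c]$ strict concavity makes the right derivative $h'_+$ strictly decreasing, so $h'_+$ has at most one zero and its sign changes at most once, from $+$ to $-$; consequently $h$ is increasing on an initial segment $(0,C)$ and decreasing on $(C,c)$ for some $C\in[0,c]$ (the degenerate cases $C=0$ and $C=c$ being allowed). Since I have already shown that $h$ is strictly decreasing on $[c,\infty)$ and that $h$ is continuous at $c$, the two decreasing pieces join to give $h$ strictly decreasing on all of $(C,\infty)$. This exhibits $h$ as increasing-decreasing with breakpoint $C\in[0,c]\subseteq[0,\infty]$, completing the concave-convex case and hence, via the reduction, the convex-concave case as well. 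The conceptual content is that a U-shaped derivative would a priori permit a $+,-,+$ sign pattern, i.e.\ increasing-decreasing-increasing; it is precisely the finite-limit hypothesis, funneled through the convex-tail step, that excludes the final increasing piece.
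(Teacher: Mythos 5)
Your proposal is correct and follows essentially the same route as the paper: reduce one case to the other via $h\mapsto-h$, use the finiteness of $h(\infty-)$ to rule out the wrong monotonicity on the convex (resp.\ concave) tail $[c,\infty)$ via a supporting-line/secant-slope argument, and then glue with the at-most-one sign change of the derivative on the concave (resp.\ convex) initial piece. The paper phrases the tail step in the mirrored (convex-concave) case -- a concave tail that is eventually decreasing would force $h(\infty-)=-\infty$ -- but this is the same argument as yours under negation, and your treatment is if anything slightly more careful about one-sided derivatives.
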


\begin{proof} Without loss of generality, $h$ is strictly convex-concave. Hence, $h$ is de\-creasing-increasing on $(0,c]$ and increasing-decreasing on $[c,\infty)$, for some $c\in(0,\infty)$. Moreover, if $h$ were decreasing on $[d,\infty)$ for some real $d\ge c$, then, because $h$ is strictly concave on $[d,\infty)$, we would have $h(\infty-)=-\infty$, which would contradict the condition $h(\infty-)\in\R$. 
Thus, $h$ is increasing on $[c,\infty)$ and decreasing-increasing on $(0,c]$, which implies that $h$ is decreasing-increasing on $(0,\infty)$.  

This completes the proof of Lemma~\ref{lem:rho''}. 
\end{proof}

\begin{proof}[Proof of Theorem~\ref{th:}]
This follows immediately from Propositions~\ref{prop:G}, \ref{prop:g}, and \ref{prop:compar} below. 
\end{proof}

\begin{proposition}\label{prop:G}
Take any real $a\ge-1$. Then (for all real $x>0$) 
\begin{equation}\label{eq:G bounds}
		\Ga(a,x)\left\{
	\begin{alignedat}{2}
	&<G_a(x)&&\text{\quad if }a\in[-1,1)\cup(2,\infty),\\
	&>G_a(x)&&\text{\quad if }a\in(1,2), \\ 	
	&=G_a(x)&&\text{\quad if }a\in\{1,2\}.	
	\end{alignedat}
	\right.
\end{equation}
Also, for each real $a\ge0$ the bound $G_a(x)$ on $\Ga(a,x)$ is exact both at $x=0$ and at $x=\infty$. In fact, this bound is exact both at $x=\infty$ for each real $a\ge-1$. 
\end{proposition}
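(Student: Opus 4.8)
The plan is to separate the exactness claims from the inequalities, with everything resting on the representation obtained from \eqref{eq:f} by the substitution $t=x+s$, namely $\Ga(a,x)\,e^x=\int_0^\infty(x+s)^{a-1}e^{-s}\,ds$, together with the companion identity $G_a(x)\,e^x=\frac1{b_a}\int_0^{b_a}(x+s)^{a-1}\,ds=\frac{(x+b_a)^a-x^a}{ab_a}$ for $a\in(-1,\infty)\setminus\{0\}$. For exactness at $x=\infty$ (all $a\ge-1$) I would factor out $x^{a-1}$ and pass to the limit by dominated convergence, since $(1+s/x)^{a-1}\to1$, giving $\Ga(a,x)e^x\sim x^{a-1}\sim G_a(x)e^x$; the cases $a=-1,0$ are read off directly from \eqref{eq:g}. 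For exactness at $x=0$ with $a>0$, letting $x\downarrow0$ in the displayed formula gives $G_a(0+)=b_a^{a-1}/a$, and the definition \eqref{eq:b} of $b_a$ is precisely what forces this to equal $\Ga(a)=\Ga(a,0+)$; the case $a=0$ is a short logarithmic computation.

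For the inequalities the plan is to analyze the ratio $r(x):=G_a(x)/\Ga(a,x)$ by two successive applications of Proposition~A. First take $f=\Ga(a,\cdot)$ and $g=G_a$ on $(0,\infty)$: here $f>0$ and $f'(x)=-x^{a-1}e^{-x}$ is of constant sign, and $f(\infty-)=g(\infty-)=0$, so the monotonicity type of $r$ coincides with that of $\rho:=g'/f'=G_a'/(-x^{a-1}e^{-x})$. Substituting $w=b_a/x$ rewrites $\rho$ as a ratio $\tilde g(w)/\tilde f(w)$ with $\tilde f(w)=ab_a\,w$, both numerator and denominator vanishing at $w=0$ (which corresponds to $x=\infty$); a second application of Proposition~A then reduces matters to the derivative ratio $\tilde g'/\tilde f'=\chi(w)/b_a$, where $\chi$ has the fully factored derivative
\[
\chi'(w)=(a-1)(1+w)^{a-3}\big[(b_a-2)+(b_a-a)w\big].
\]

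The sign of $\chi'$ is thus controlled by the constant $(a-1)$ and the linear factor $\ell(w):=(b_a-2)+(b_a-a)w$. Here I would invoke the monotonicity of $b_a$ from Lemma~\ref{prop:b} in the sharp form $\sign(b_a-2)=\sign(a-2)$ and $\sign(b_a-a)=\sign(2-a)$; these guarantee that on $(0,\infty)$ the factor $\ell$ has a single interior sign change with a definite orientation in each of the ranges $a<1$, $1<a<2$, $a>2$, while $\chi'\equiv0$ for $a\in\{1,2\}$ (because $(a-1)=0$ or $\ell\equiv0$). Tracking these signs through the two applications of Proposition~A, and using that the reparametrization $w=b_a/x$ preserves the increasing-decreasing/decreasing-increasing dichotomy, I obtain that $r$ is increasing-decreasing for $a\in[-1,1)\cup(2,\infty)$, decreasing-increasing for $a\in(1,2)$, and constant for $a\in\{1,2\}$. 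Combining with the boundary values finishes the proof: $r(\infty-)=1$ always; $r(0+)=1$ for $a\ge0$ by exactness at $0$; and $r(0+)=1/b_a\ge1$ (or $+\infty$) for $-1\le a<0$, from the leading terms $\Ga(a,x)\sim-x^a/a$ and $G_a(x)\sim-x^a/(ab_a)$ as $x\downarrow0$ together with $b_a\le1$ there. A nonconstant increasing-decreasing $r$ with $r(\infty-)=1$ and $r(0+)\ge1$ must satisfy $r>1$ throughout, i.e.\ $\Ga(a,x)<G_a(x)$; a nonconstant decreasing-increasing $r$ with both boundary values $1$ gives $r<1$, i.e.\ $\Ga(a,x)>G_a(x)$; and $r\equiv1$ gives equality.

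The main obstacle is twofold. The analytic core is the sign behaviour of $b_a$---its monotonicity and the two comparisons with $2$ and with $a$---since the orientation of $\ell$, and hence the direction of every inequality, hinges on it; this is exactly what Lemma~\ref{prop:b} must deliver, and it is the one genuinely nontrivial input. The remaining difficulty is bookkeeping: verifying the hypotheses of Proposition~A at each stage (constant sign of the relevant derivatives), correctly transporting the increasing-decreasing type through the change of variable $w=b_a/x$ and back to $x$, and treating the degenerate endpoints $a=-1$ (where $b_{-1}=0$, so $G_{-1}$ must be handled from its separate definition in \eqref{eq:g} or by continuity in $a$) and $a=0$ (the logarithmic case).
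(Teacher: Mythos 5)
Your proposal is correct and follows the same overall strategy as the paper: Proposition~A applied to $f=\Ga(a,\cdot)$ and $g=G_a$, a second-derivative sign analysis of the ratio $\rho=G_a'/f'$ governed by the position of $b_a$ relative to $a$ and to $2$, and the endpoint values $r(0+)$, $r(\infty-)$ to convert the increasing-decreasing/decreasing-increasing dichotomy into the stated inequalities (including the observation that $r(0+)=1/b_a>1$ for $-1\le a<0$). The one step where you genuinely diverge is the passage from the sign pattern of the second derivative to the monotonicity pattern of $\rho$: the paper computes $\rho''(x)x^{a+1}(b_a+x)^{3-a}=(a-1)\bigl(b_a(b_a-a)-(2-b_a)x\bigr)$, concludes that $\rho$ is strictly convex-concave or concave-convex, and then invokes its Lemma~\ref{lem:rho''} (together with $\rho(\infty-)=1\in\R$) to get the monotonicity type; you instead substitute $w=b_a/x$, write $\rho=\tilde g/\tilde f$ with $\tilde f(0+)=\tilde g(0+)=0$, and apply Proposition~A a second time to the factored derivative $\chi'(w)=(a-1)(1+w)^{a-3}\bigl[(b_a-2)+(b_a-a)w\bigr]$, which carries exactly the same sign information. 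Both routes work; yours trades Lemma~\ref{lem:rho''} for the (correct, but worth stating explicitly) observation that the decreasing reparametrization $w=b_a/x$ preserves the increasing-decreasing type. One attribution to fix: the comparison $\sign(b_a-a)=\sign(2-a)$ does \emph{not} follow from the monotonicity of $b_a$ in Lemma~\ref{prop:b} alone (monotonicity only yields $\sign(b_a-2)=\sign(a-2)$ via $b_2=2$); it is the content of the paper's separate Lemma~\ref{lem:}, proved there by showing that $h(a)=\ln\Ga(a+1)+(1-a)\ln a$ is strictly concave with $h(1)=h(2)=0$. You correctly flag this sign fact as the one nontrivial input, so this is a citation issue rather than a gap, but your write-up should supply (or point to) that concavity argument. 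The remaining items you defer --- the separate treatment of $a=0$ and $a=-1$, where $\rho$ must be computed from the logarithmic and $x^{-2}e^{-x}$ formulas in \eqref{eq:g} --- are handled in the paper by the same method and are indeed only bookkeeping.
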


\begin{proposition}\label{prop:g}
Take any real $a\ge1$. Then (for all real $x>0$) 
\begin{equation}\label{eq:g bounds}
		\Ga(a,x)\left\{
	\begin{alignedat}{2}
	&>g_a(x)&&\text{\quad if }a\in(1,2)\cup(3,\infty),\\
	&<g_a(x)&&\text{\quad if }a\in(2,3), \\ 	
	&=g_a(x)&&\text{\quad if }a\in\{1,2,3\}.	
	\end{alignedat}
	\right.
\end{equation}
Also, the bound $g_a(x)$ on $\Ga(a,x)$ is exact both at $x=0$ and at $x=\infty$. 
In fact, this bound is exact both at $x=\infty$ for each real $a\ge-1$. 
%
\end{proposition}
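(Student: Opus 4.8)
\emph{Plan.} Since $e^{x}>0$, I would first reduce everything to the sign of
\[
D(x):=e^{x}\big(\Ga(a,x)-g_a(x)\big)=\phi(x)-\psi(x),
\]
where $\phi(x):=e^{x}\Ga(a,x)=\int_0^\infty(x+s)^{a-1}e^{-s}\,ds$ (substitute $t=x+s$ in \eqref{eq:f}) and $\psi(x):=e^{x}g_a(x)=\frac{(x+2)^a-x^a-2^a}{2a}+\Ga(a)$ by \eqref{eq:g-rev}. The first thing to record is $\phi(0)=\Ga(a)=\psi(0)$ (using $0^a=0$ for $a>0$), so that $D(0)=0$. The whole proof then reduces to determining the sign of $D'$ on $(0,\infty)$: because $D(0)=0$, a definite sign of $D'$ transfers to $D$, and hence to $\Ga(a,x)-g_a(x)$, by integration, while the cases $D'\equiv0$ produce the equalities in \eqref{eq:g bounds}.

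\emph{The key identity.} Since $\partial_x\Ga(a,x)=-x^{a-1}e^{-x}$ by \eqref{eq:f}, one gets $\phi'(x)=\phi(x)-x^{a-1}$, whereas $\psi'(x)=\tfrac12\big((x+2)^{a-1}-x^{a-1}\big)$. Hence
\begin{align*}
D'(x)&=\phi(x)-\frac{x^{a-1}+(x+2)^{a-1}}{2}\\
&=\int_0^\infty q_x(s)\,e^{-s}\,ds-\frac{q_x(0)+q_x(2)}{2}=:\Lambda(q_x),
\end{align*}
where $q_x(s):=(x+s)^{a-1}$ and $\Lambda$ is the linear functional $q\mapsto\int_0^\infty q(s)e^{-s}\,ds-\tfrac12\big(q(0)+q(2)\big)$. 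A one-line check shows that $\Lambda$ annihilates $1,s,s^2$ (each value is $1-1=0$), so $\Lambda$ admits a third-order Peano representation
\[
\Lambda(q)=\int_0^\infty K(t)\,q'''(t)\,dt,\qquad K(t)=e^{-t}-\tfrac14\,(2-t)_+^2,\quad u_+:=\max(u,0),
\]
the kernel being $K(t)=\Lambda$ applied to $s\mapsto\tfrac12(s-t)_+^2$. I would justify this by integrating by parts three times, the boundary terms vanishing because $q_x$ grows only polynomially while $e^{-s}$ decays.

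\emph{Sign analysis and exactness.} For fixed $a$ one has $q_x'''(s)=(a-1)(a-2)(a-3)(x+s)^{a-4}$, whose sign is that of $(a-1)(a-2)(a-3)$, independent of $s$ and of $x>0$. Once $K\ge0$ is established, $D'(x)=\Lambda(q_x)$ therefore carries the constant sign $\sign\big[(a-1)(a-2)(a-3)\big]$ for every $x>0$, and $D'\equiv0$ exactly when this product vanishes, i.e.\ for $a\in\{1,2,3\}$ (where $q_x$ is a polynomial of degree $\le2$ in $s$). Reading off this sign on each interval and integrating from $D(0)=0$ gives $\Ga(a,x)>g_a(x)$ on $(1,2)\cup(3,\infty)$, $\Ga(a,x)<g_a(x)$ on $(2,3)$, and equality at $a\in\{1,2,3\}$, which is precisely \eqref{eq:g bounds}. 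Exactness at $x=0$ is immediate: for $a\ge1$ both $\Ga(a,x)$ and $g_a(x)$ tend to $\Ga(a)\in(0,\infty)$ as $x\downarrow0$ and $g_a(0{+})=\Ga(a)=\Ga(a,0)$, so their ratio tends to $1$. For exactness at $x=\infty$, expanding $(x+2)^a-x^a=2a\,x^{a-1}+O(x^{a-2})$ yields $\psi(x)=x^{a-1}+O(x^{a-2})$, whence $g_a(x)\sim x^{a-1}e^{-x}\sim\Ga(a,x)$; this expansion holds for every $a$ for which $g_a$ is defined, covering the stated range.

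\emph{The main obstacle.} The one genuinely nonroutine point is the nonnegativity of the Peano kernel on $[0,2]$, where $K(t)=e^{-t}-\tfrac14(2-t)^2$; it is delicate precisely because $K(0)=K'(0)=0$. I would settle it by noting $K''(t)=e^{-t}-\tfrac12>0$ for $t<\ln2$, so that $K'$ increases off $0$ and then, since $K''<0$ for $t>\ln2$, decreases and crosses zero once in $(\ln2,2)$. Thus $K$ rises from $K(0)=0$ to an interior maximum and falls to $K(2)=e^{-2}>0$, so its minimum over $[0,2]$ is the endpoint value $0$, giving $K\ge0$ there; on $[2,\infty)$ one simply has $K(t)=e^{-t}>0$. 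Since $K>0$ for $t>0$ and $q_x'''$ is of one strict sign, all the inequalities above are in fact strict. This difference-plus-Peano-kernel route is an alternative to the l'Hospital-type machinery of Proposition~A; everything beyond the kernel positivity is bookkeeping.
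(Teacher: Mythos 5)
Your proof is correct, and it takes a genuinely different route from the paper's. The paper handles this proposition with its l'Hospital-type monotonicity machinery: it forms $\rho=g_a'/f_a'$ with $f_a=\Ga(a,\cdot)$, computes $\rho''(x)\,x^{a+1}/(2(a-1))=(2-a)(2+x)^{a-3}+C_a/2$ with $C_a=\Ga(a+1)-2^{a-1}$, pins down the sign of $C_a$ via log-convexity of $\Ga(a+1)/2^{a-1}$, deduces that $\rho$ is concave-convex or convex-concave, hence increasing-decreasing or decreasing-increasing by Lemma~\ref{lem:rho''}, transfers this to $r=g_a/f_a$ via part~(ii) of Proposition~A, and concludes from the boundary values $r(0+)=r(\infty-)=1$. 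You instead work with the difference $D(x)=e^x\big(\Ga(a,x)-g_a(x)\big)$, note $D(0)=0$, and recognize $D'(x)$ as a linear functional $\Lambda$ applied to $s\mapsto(x+s)^{a-1}$ that annihilates quadratics, so that a Peano-kernel representation reduces everything to the positivity of the explicit kernel $K(t)=e^{-t}-\tfrac14(2-t)_+^2$ together with the constant sign of $q_x'''(s)=(a-1)(a-2)(a-3)(x+s)^{a-4}$. I checked the computations: $\Lambda(1)=\Lambda(s)=\Lambda(s^2)=0$; the kernel value $\Lambda_s\big[\tfrac12(s-t)_+^2\big]=e^{-t}-\tfrac14(2-t)_+^2$ is right; $K(0)=K'(0)=0$ with $K''(t)=e^{-t}-\tfrac12$ does give $K>0$ on $(0,\infty)$; and the interchange of $\Lambda$ with the Taylor remainder integral is licensed by Tonelli since $q_x'''$ is one-signed and polynomially bounded while $\int_0^\infty\tfrac12(s-t)_+^2e^{-s}\,ds=e^{-t}$. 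Your route is more elementary and self-contained, it makes the equality cases $a\in\{1,2,3\}$ and the sign pattern on the complementary intervals structurally transparent (the sign is exactly that of $(a-1)(a-2)(a-3)$), and strictness comes for free from $K>0$; what it does not buy is the uniformity of the paper's method, which reuses the same Proposition~A template for Proposition~\ref{prop:G}, Theorem~\ref{th:a<-1}, and Proposition~\ref{prop:G_{a;-1}}. Your treatment of exactness ($g_a(0+)=\Ga(a)=\Ga(a,0+)$ and $e^xg_a(x)=x^{a-1}+O(x^{a-2})+\mathrm{const}$ as $x\to\infty$) matches the paper's and covers the whole hypothesis range $a\ge1$; the proposition's trailing claim about $a\ge-1$ is a leftover from Proposition~\ref{prop:G} and need not be addressed.
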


\begin{proposition}\label{prop:compar}
For all real $x>0$ 
\begin{alignat}{2}
		&g_a(x)<G_a(x)&&\text{\quad if\quad }a\in(1,2)\cup(2,\infty). \label{eq:g<G} 
\end{alignat}
\end{proposition}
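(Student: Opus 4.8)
The plan is to prove the equivalent inequality $D(x)>0$ for all $x>0$, where $D(x):=e^x\big(G_a(x)-g_a(x)\big)$; explicitly, for $a\in(1,2)\cup(2,\infty)$,
\[
D(x)=\frac{(x+b_a)^a-x^a}{ab_a}-\frac{(x+2)^a-x^a-2^a}{2a}-\Ga(a).
\]
First I would record the boundary value at $0$: letting $x\downarrow0$ and using the defining relation $b_a^{\,a-1}=\Ga(a+1)=a\Ga(a)$ (valid for $a>1$), the first term tends to $b_a^{\,a-1}/a=\Ga(a)$, the middle term vanishes, and the constant $-\Ga(a)$ remains, so $D(0+)=\Ga(a)-\Ga(a)=0$. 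Hence it suffices to show that $D$ is strictly increasing on $(0,\infty)$.

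Next I would differentiate, obtaining
\[
D'(x)=\frac{(x+b_a)^{a-1}-x^{a-1}}{b_a}-\frac{(x+2)^{a-1}-x^{a-1}}{2}=\phi(b_a)-\phi(2),
\]
where $\phi(h):=\dfrac{(x+h)^{a-1}-x^{a-1}}{h}$ is the slope of the secant of $h\mapsto(x+h)^{a-1}$ joining $h=0$ to $h>0$. Since the second derivative of $(x+h)^{a-1}$ in $h$ has sign $\sign\big((a-1)(a-2)\big)$, this secant slope is strictly decreasing in $h$ when $a\in(1,2)$ (the function is concave) and strictly increasing when $a>2$ (convex). The crucial point is that this monotonicity, and therefore the sign of $D'(x)$, is \emph{independent of $x$}. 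Consequently $D'(x)>0$ for every $x>0$ exactly when $b_a<2$ in the range $a\in(1,2)$, and when $b_a>2$ in the range $a>2$.

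The remaining, and main, step is then the comparison of $b_a$ with $2$: I claim $b_a<2$ for $a\in(1,2)$ and $b_a>2$ for $a>2$. Taking logarithms and using $a-1>0$, this is equivalent to a sign statement for $q(a):=\ln\Ga(a+1)-(a-1)\ln2$, namely $q(a)<0$ on $(1,2)$ and $q(a)>0$ on $(2,\infty)$. Here I would note that $q(1)=\ln\Ga(2)=0$ and $q(2)=\ln\Ga(3)-\ln2=0$, while $q''(a)=(\ln\Ga)''(a+1)=\sum_{n\ge0}(a+1+n)^{-2}>0$, so $q$ is strictly convex on $(1,\infty)$. A strictly convex function vanishing at the two points $a=1$ and $a=2$ is strictly negative between them and strictly positive beyond $a=2$, which yields precisely the required signs of $q$, hence of $b_a-2$.

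Combining the pieces: in both ranges $a\in(1,2)$ and $a>2$ one gets $D'(x)>0$ for all $x>0$, so $D$ increases from $D(0+)=0$ and therefore $D(x)>0$, i.e.\ $g_a(x)<G_a(x)$, for every $x>0$. I expect the convexity argument comparing $b_a$ with $2$ to be the only genuinely nontrivial ingredient; everything else rests on the clean observation that $\sign D'(x)$ does not depend on $x$, which collapses the whole proposition to that single comparison.
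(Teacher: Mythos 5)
Your proof is correct, and its overall strategy coincides with the paper's: normalize the difference of the two bounds (your $D(x)=e^x(G_a(x)-g_a(x))$ is, up to the constant factor $-2ab_a$, the paper's function $d(x)$), check that it vanishes as $x\downarrow0$, and reduce its strict monotonicity in $x$ to the single comparison of $b_a$ with $2$. You diverge in two places, both toward greater transparency. For the monotonicity step the paper differentiates a second time after the substitution $u=1/x$, landing on $(1+2u)^{a-2}-(1+b_au)^{a-2}$; you stop after one differentiation and read off $D'(x)=\phi(b_a)-\phi(2)$ as a difference of secant slopes of $h\mapsto(x+h)^{a-1}$, whose monotonicity in $h$ has the sign of $(a-1)(a-2)$ uniformly in $x$ --- the same computation in disguise, but it makes the independence of $x$ conceptually obvious. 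For the comparison of $b_a$ with $2$, the paper invokes Lemma~\ref{prop:b} ($b_a$ increasing in $a$ with $b_2=2$), which itself rests on the l'Hospital-type monotonicity rule; your argument via strict convexity of $q(a)=\ln\Ga(a+1)-(a-1)\ln2$ with $q(1)=q(2)=0$ and $q''(a)=\psi'(a+1)>0$ is self-contained and more elementary. The remaining details check out: $D(0+)=b_a^{a-1}/a-\Ga(a)=0$ by the definition $b_a^{a-1}=\Ga(a+1)$, and the sign bookkeeping in the two ranges $a\in(1,2)$ and $a>2$ is consistent.
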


To prove Proposition~\ref{prop:G}, we shall need the following two lemmas. 

\begin{lemma}\label{prop:b}
$b_a$ is continuously increasing in real $a>-1$ from $b_{(-1)+}=0$ to $b_0=1$ to $b_1=e^{1-\ga}$ to $b_2=2$ to $b_{\infty-}=\infty$.  
\end{lemma}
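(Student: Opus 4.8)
The plan is to pass to the logarithm. Set $L(a):=\ln b_a$, so that for $a\in(-1,\infty)\setminus\{1\}$ one has $L(a)=\dfrac{g(a)}{f(a)}$ with $g(a):=\ln\Ga(a+1)$ and $f(a):=a-1$; here $g$ is well defined and differentiable since $a+1>0$. Because $t\mapsto e^t$ is strictly increasing, the whole lemma reduces to showing that $L$ extends continuously to all of $(-1,\infty)$, is strictly increasing there, and has the stated values and one-sided limits (after exponentiation).

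First I would dispose of the pointwise values. Directly, $b_0=\Ga(1)^{-1}=1$ and $b_2=\Ga(3)^1=2$. At the point $a=1$ both $g$ and $f$ vanish ($g(1)=\ln\Ga(2)=0$, $f(1)=0$), so by l'Hospital's rule $\lim_{a\to1}L(a)=\lim_{a\to1}\frac{g'(a)}{f'(a)}=\psi(2)=1-\ga$, where $\psi=\Ga'/\Ga$ is the digamma function and $\psi(2)=\psi(1)+1=1-\ga$. This simultaneously shows that the value $b_1=e^{1-\ga}$ prescribed in \eqref{eq:b} is exactly the continuous extension, so $L$ is continuous on all of $(-1,\infty)$. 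For the endpoints: as $a\downarrow-1$ we have $\Ga(a+1)\to+\infty$ while $f(a)\to-2$, so $L(a)\to-\infty$ and thus $b_a\to0$; as $a\to\infty$, Stirling's formula gives $g(a)\sim a\ln a$ and hence $L(a)\sim\ln a\to\infty$, so $b_a\to\infty$.

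The core is the monotonicity of $L$, which I would obtain from Proposition~\ref{prop:lhosp} applied separately on $(-1,1)$ and on $(1,\infty)$. On each of these intervals $f(a)=a-1$ and $f'(a)\equiv1$ are nonzero and of constant sign, and the common zero of $f$ and $g$ sits at the shared endpoint $a=1$ (since $f(1\mp)=g(1\mp)=0$), so the hypotheses hold with $B=1$ on $(-1,1)$ and with $A=1$ on $(1,\infty)$. The derivative ratio is $\rho(a)=\dfrac{g'(a)}{f'(a)}=\psi(a+1)$, which is strictly increasing on $(-1,\infty)$ because the trigamma function $\psi'(z)=\sum_{n\ge0}(z+n)^{-2}$ is positive for $z>0$. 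Hence Proposition~\ref{prop:lhosp}(i) yields that $L=g/f$ is strictly increasing on $(-1,1)$ and on $(1,\infty)$; combined with the continuity at $a=1$ established above, $L$ is strictly increasing on the whole of $(-1,\infty)$. Exponentiating shows that $b_a=e^{L(a)}$ is continuous and strictly increasing, and the computed values and limits give the chain $b_{(-1)+}=0<b_0=1<b_1=e^{1-\ga}<b_2=2<b_{\infty-}=\infty$.

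The only genuinely delicate point is organizational rather than computational: Proposition~\ref{prop:lhosp} requires the common zero of $f$ and $g$ to be at an \emph{endpoint} of the interval, whereas here it is the interior point $a=1$. The remedy is to split at $a=1$, apply the rule on each side, and then glue the two monotone pieces using the continuity (equivalently, the l'Hospital limit) at $a=1$; one must check that the one-sided limits of $L$ at $1$ both equal the prescribed value $1-\ga$, so that no downward jump is introduced when the pieces are joined. Everything else—the constant sign of $f$, the positivity of the trigamma function, and the two endpoint limits—is routine.
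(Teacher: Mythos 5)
Your proof is correct and follows essentially the same route as the paper: both pass to $\ln b_a=\frac{\ln\Ga(a+1)}{a-1}$, apply the special l'Hospital-type monotonicity rule on $(-1,1)$ and $(1,\infty)$ separately using the increasingness of $\psi(a+1)$ (the paper phrases this as strict log convexity of $\Ga$, you as positivity of the trigamma series), and glue at $a=1$ via the l'Hospital limit $\psi(2)=1-\ga$. The endpoint limits are handled by the same elementary observations, so there is nothing substantive to add.
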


\begin{proof}
%

The most essential ingredient of this proof is Proposition~A, stated in the beginning of this section. 
Indeed, for $a\in(-1,\infty)\setminus\{1\}$ we have $\ln b_a=\frac{\ln\Ga(a+1)}{a-1}$, and the ``derivative'' ratio for the ratio $\frac{\ln\Ga(a+1)}{a-1}$ is $\frac d{da}\,\ln\Ga(a+1)$, which is increasing in $a\in(-1,\infty)$, since the function $\Ga$ is strictly log convex. So, by part~(i) of Proposition~A, $b_a$ is increasing in $a\in(-1,1)$ and in $a\in(1,\infty)$. 

Moreover, using the well known fact (see e.g.\ \cite[formula (1.2.12)]{andrews}) that 
\begin{equation}\label{eq:Ga'(1)}
\Ga'(1)=-\ga, 	
\end{equation}
the identity $\Ga(a+1)=a\Ga(a)$, and l'Hospital's rule, we see that $b_a$ is continuous in $a$ at $a=1$ and hence in all real $a>-1$. 
Therefore, $b_a$ is increasing in all real $a>-1$ . 

%

The equality $b_{(-1)+}=0$ follows immediately from the identity $\Ga(a+1)=\frac{\Ga(a+2)}{a+1}$ for $a\ne1$. The equalities $b_0=1$ and $b_2=2$ are trivial. Finally, the equality $b_{\infty-}=\infty$ follows easily from Stirling's formula. 
\end{proof}

\begin{lemma}\label{lem:} We have 
\begin{equation*}
	\begin{alignedat}{2}
	&a<b_a<2&&\text{\quad if }a\in
	(-1,2),\\
	&a>b_a>2&&\text{\quad if }a\in(2,\infty). 
	\end{alignedat}
\end{equation*}
\end{lemma}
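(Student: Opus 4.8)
The plan is to split each double inequality into two parts and to feed one part from Lemma~\ref{prop:b} and the other from a single concavity fact. Since $b_a$ is strictly increasing in $a>-1$ with $b_2=2$ (Lemma~\ref{prop:b}), we get at once $b_a<2$ for $a\in(-1,2)$ and $b_a>2$ for $a>2$; this already supplies one of the two inequalities on each line. Moreover, for $a\in(-1,0]$ the remaining inequality $a<b_a$ is trivial, because $b_a>0\ge a$ there. So the only genuine content is to prove $a<b_a$ for $a\in(0,2)$ and $a>b_a$ for $a>2$.

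First I would take logarithms. As recorded in the proof of Lemma~\ref{prop:b}, for $a>0$ with $a\ne1$ one has $\ln b_a=\frac{\ln\Ga(a+1)}{a-1}$, so that
\[
\ln b_a-\ln a=\frac{p(a)}{a-1},\qquad p(a):=\ln\Ga(a+1)-(a-1)\ln a .
\]
Hence $b_a>a$ is equivalent to $p(a)/(a-1)>0$, and $b_a<a$ to $p(a)/(a-1)<0$. A direct evaluation gives $p(1)=\ln\Ga(2)=0$ and $p(2)=\ln\Ga(3)-\ln 2=0$. Everything thus reduces to the sign pattern of $p$ on $(0,\infty)$: I want $p<0$ on $(0,1)$, $p>0$ on $(1,2)$, and $p<0$ on $(2,\infty)$. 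Multiplying by $\sign(a-1)$ then yields $b_a>a$ on $(0,1)\cup(1,2)$ and $b_a<a$ on $(2,\infty)$, while the excluded value $a=1$ is handled separately by the explicit $b_1=e^{1-\ga}\in(1,2)$.

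The key step is to show that $p$ is strictly concave on $(0,\infty)$. Differentiating twice and using the classical series $(\ln\Ga)''(a+1)=\sum_{n=1}^\infty(a+n)^{-2}$, I get
\[
p''(a)=\sum_{n=1}^\infty\frac1{(a+n)^2}-\frac1a-\frac1{a^2}.
\]
Since $t\mapsto(a+t)^{-2}$ is decreasing, $\sum_{n=1}^\infty(a+n)^{-2}<\int_0^\infty(a+t)^{-2}\,dt=1/a$, whence $p''(a)<-1/a^2<0$ for every $a>0$. A strictly concave function vanishing at the two points $1<2$ must be positive strictly between them and negative outside: on $[1,2]$ it lies strictly above its chord (the zero line), giving $p>0$ on $(1,2)$; while for $a>2$ (respectively $a\in(0,1)$) one writes $2$ as a strict convex combination of $\{1,a\}$ (respectively $1$ as a strict convex combination of $\{a,2\}$) and invokes strict concavity together with $p(1)=p(2)=0$ to force $p(a)<0$. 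This is precisely the sign pattern required above, so the lemma follows.

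I expect the only delicate point to be the strict concavity of $p$, but it collapses to the one-line trigamma-type estimate $\sum_{n\ge1}(a+n)^{-2}<1/a$; the rest is elementary bookkeeping with the sign of $a-1$ and the monotonicity already proved in Lemma~\ref{prop:b}. As an alternative that stays closer to the paper's toolkit, one could instead apply the l'Hospital-type monotonicity rule Proposition~A to the ratio $p(a)/(a-1)$ anchored at $a=1$ (where both numerator and denominator vanish), but the concavity route seems the cleanest and most self-contained.
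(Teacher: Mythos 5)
Your proof is correct and is essentially the paper's own argument: your $p(a)=\ln\Ga(a+1)-(a-1)\ln a$ is exactly the paper's auxiliary function $h$, and you establish its strict concavity via the same trigamma estimate $\psi'(a+1)=\sum_{n\ge1}(a+n)^{-2}<1/a$, then read off the sign pattern from the zeros at $a=1,2$. The only differences are cosmetic (dividing by $a-1$ versus keeping $h(a)=(a-1)\ln(b_a/a)$, and the explicit handling of $a=1$), so nothing further is needed.
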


\begin{proof} If $a\in(-1,0]$, then the inequality $a<b_a$ is obvious and the inequality $b_a<2$ follows by Lemma~\ref{prop:b}. 

Take now any real $a>0$. Let 
\begin{equation*}
	h(a):=(a-1)\ln(b_a/a)=\ln\Ga(a+1)+(1-a)\ln a. 
\end{equation*}
Then $h''(a)=\psi'(a+1)-\frac1a-\frac1{a^2}<\psi'(a+1)-\frac1a$, where, as usual, $\psi:=(\ln\Ga)'=\Ga'/\Ga$, and, by \cite[formula~(1.2.14)]{andrews}, 
\begin{equation*}
\psi'(a+1)=\sum_{k=0}^\infty\frac1{(a+1+k)^2}<\int_a^\infty\frac{dx}{x^2}=\frac1a. 
\end{equation*}
So, $h''<0$ and hence $h$ is strictly concave on $(0,\infty)$. Also, $h(1)=h(2)=0$. 
Hence, $h<0$ on $(0,1)\cup(2,\infty)$ and $h>0$ on $(1,2)$. 
Now the inequalities $a<b_a$ for $a\in(0,1)\cup(1,2)$ and $a>b_a$ for $a\in(2,\infty)$ follow immediately from the definition of $h$. 
The inequalities $b_a<2$ for $a\in(0,1)\cup(1,2)$ and $b_a>2$ for $a\in(2,\infty)$, as well as the inequalities $a<b_a<2$ for $a=1$, follow immediately from Lemma~\ref{prop:b}. 

This completes the proof of Lemma~\ref{lem:}. 
\end{proof}

The following two very simple lemmas will be used repeatedly. 

\begin{lemma}\label{lem:infty}
For each real $a$ we have $\Ga(a,x)\sim x^{a-1}e^{-x}$ as $x\to\infty$. 
\end{lemma}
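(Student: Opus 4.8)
The plan is to establish the asymptotic equivalence $\Ga(a,x)\sim x^{a-1}e^{-x}$ by a direct application of l'Hospital's rule to the ratio $\Ga(a,x)\big/\big(x^{a-1}e^{-x}\big)$ as $x\to\infty$. (Note that Proposition~A would not serve here, as it yields monotonicity of a ratio from monotonicity of the derivative ratio, whereas what is wanted is the \emph{value} of a limit.) First I would verify that this is a genuine $0/0$ indeterminate form. For the denominator this is immediate, since $x^{a-1}e^{-x}\to0$. For the numerator, I would observe that $\Ga(a,x)=\int_x^\infty t^{a-1}e^{-t}\,dt$ is the tail of the convergent integral $\int_{x_0}^\infty t^{a-1}e^{-t}\,dt$ (for any fixed $x_0>0$), and hence tends to $0$ as $x\to\infty$.

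Next I would differentiate numerator and denominator. By the fundamental theorem of calculus applied to \eqref{eq:f}, the derivative of the numerator is $\frac{d}{dx}\Ga(a,x)=-x^{a-1}e^{-x}$, while
\begin{equation*}
	\frac{d}{dx}\big(x^{a-1}e^{-x}\big)=x^{a-1}e^{-x}\Big(\frac{a-1}{x}-1\Big).
\end{equation*}
For all $x>a-1$ (in particular, for all large enough $x$) the denominator $x^{a-1}e^{-x}$ and its derivative are both nonzero and the latter is of one sign, so the hypotheses of l'Hospital's rule are satisfied on a right-neighbourhood of $\infty$. Taking the ratio of the two derivatives gives
\begin{equation*}
	\frac{-x^{a-1}e^{-x}}{x^{a-1}e^{-x}\big(\frac{a-1}{x}-1\big)}=\frac1{1-(a-1)/x}\underset{x\to\infty}\longrightarrow1,
\end{equation*}
whence $\Ga(a,x)\big/\big(x^{a-1}e^{-x}\big)\to1$, which is exactly the claimed equivalence.

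The computation being entirely elementary, I do not anticipate any serious obstacle. The only two points deserving a word of care are precisely those I flagged above: confirming that $\Ga(a,x)\to0$ (so that the limit is an honest $0/0$ form rather than, say, a nonzero limit masquerading as one), and noting that the derivative of $x^{a-1}e^{-x}$ is eventually of constant sign, which is what licenses the use of l'Hospital's rule near $\infty$ for every fixed real $a$.
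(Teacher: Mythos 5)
Your proof is correct and follows essentially the same route as the paper, whose entire argument for this lemma is the remark that it ``follows immediately by the l'Hospital rule''; you have simply written out the details (the $0/0$ verification, the derivative $\frac{d}{dx}\Ga(a,x)=-x^{a-1}e^{-x}$, and the limit of the derivative ratio) that the paper leaves implicit. No issues.
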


This follows immediately by the l'Hospital rule.  

\begin{lemma}\label{lem:0}
Take any real $a$. Then  
\begin{equation}\label{eq:Ga sim}
		\Ga(a,x)\widesim{x\downarrow0}
		\left\{
	\begin{alignedat}{2}
	&-x^a/a&&\text{\quad if }a<0,\\
	&-\ln x&&\text{\quad if }a=0, \\ 	
	&\Ga(a)&&\text{\quad if }a>0.	
	\end{alignedat}
	\right.
\end{equation}
Moreover,
\begin{equation}\label{eq:Ga,a=0}
	\Ga(0,x)=-\ln x-\ga+O(x)  
\end{equation}
as $x\downarrow0$, where, again, $\ga$ is the Euler constant.
\end{lemma}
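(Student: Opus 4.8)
The plan is to argue directly from the integral representation \eqref{eq:f}, splitting into the three regimes $a>0$, $a=0$, and $a<0$ according to the integrability of $t^{a-1}e^{-t}$ near $t=0$. The case $a>0$ is immediate: the integrand is integrable on $(0,\infty)$, so letting $x\downarrow0$ in $\Ga(a,x)=\int_x^\infty t^{a-1}e^{-t}\,dt$ gives $\Ga(a,x)\to\int_0^\infty t^{a-1}e^{-t}\,dt=\Ga(a)>0$, which is exactly $\Ga(a,x)\widesim{x\downarrow0}\Ga(a)$.

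For $a=0$ I would split the integral at $t=1$ and extract the logarithmic singularity by writing $e^{-t}/t=1/t+(e^{-t}-1)/t$ on $(x,1)$, obtaining
\[
\Ga(0,x)=-\ln x+\int_x^1\frac{e^{-t}-1}{t}\,dt+\int_1^\infty\frac{e^{-t}}{t}\,dt .
\]
Since $(e^{-t}-1)/t$ extends continuously (hence boundedly) to $[0,1]$, one has $\int_0^x\frac{e^{-t}-1}{t}\,dt=O(x)$, so $\int_x^1=\int_0^1+O(x)$ and therefore $\Ga(0,x)=-\ln x+C+O(x)$ with $C:=\int_0^1\frac{e^{-t}-1}{t}\,dt+\int_1^\infty\frac{e^{-t}}{t}\,dt$. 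The refined claim \eqref{eq:Ga,a=0} then reduces to the identity $C=-\ga$, a standard integral representation of the Euler constant (obtainable from $\Ga'(1)=-\ga$ in \eqref{eq:Ga'(1)} by integration by parts). Because $-\ln x\to\infty$, this simultaneously yields $\Ga(0,x)\widesim{x\downarrow0}-\ln x$.

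For $a<0$ I would again split at $t=1$ and isolate the leading term, using $\int_x^1 t^{a-1}\,dt=\tfrac{1-x^a}a$:
\[
\Ga(a,x)=\Big(\tfrac1a-\tfrac{x^a}a\Big)+\int_x^1 t^{a-1}(e^{-t}-1)\,dt+\int_1^\infty t^{a-1}e^{-t}\,dt .
\]
Since $a<0$, the term $-x^a/a=x^a/|a|\to+\infty$ dominates. The bound $|e^{-t}-1|\le t$ on $[0,1]$ controls the correction integral by $\int_x^1 t^a\,dt$, which is $O(1)$ if $a\in(-1,0)$, equals $-\ln x$ if $a=-1$, and is $O(x^{a+1})$ if $a<-1$; in each case it is $o(x^a)$, because $x^{a+1}=o(x^a)$ and $\ln x=o(x^{-1})$ as $x\downarrow0$. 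The two remaining terms are constants, hence also $o(x^a)$. Thus $\Ga(a,x)\sim-x^a/a$, uniformly covering all $a<0$ at once.

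The only nonroutine step is pinning down the additive constant in the $a=0$ case as exactly $-\ga$; the three asymptotic equivalences themselves follow from elementary estimates and dominated convergence. An alternative, fully inductive route uses the recursion \eqref{eq:Ga,fwd} to reduce a general $a<0$ to the base range $-1\le a<0$, but the direct split at $t=1$ is cleaner and sidesteps the induction.
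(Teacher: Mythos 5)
Your proof is correct, but it takes a genuinely different route from the paper's. For the equivalences in \eqref{eq:Ga sim} with $a\le 0$ the paper simply applies the l'Hospital rule to the ratios $\Ga(a,x)/(-x^a/a)$ and $\Ga(0,x)/(-\ln x)$ (one line each, since $\Ga(a,x)'=-x^{a-1}e^{-x}$), whereas you split the integral at $t=1$ and estimate the remainder explicitly via $|e^{-t}-1|\le t$; your version is more hands-on but also yields the size of the correction terms ($O(1)$, $-\ln x$, $O(x^{a+1})$) as a by-product, which the l'Hospital argument does not. For the refinement \eqref{eq:Ga,a=0} the paper integrates by parts against $\ln t$ and invokes $\int_0^\infty e^{-t}\ln t\,dt=\Ga'(1)=-\ga$, while you subtract the $1/t$ singularity and reduce the constant to the standard representation $\ga=\int_0^1\frac{1-e^{-t}}{t}\,dt-\int_1^\infty\frac{e^{-t}}{t}\,dt$; the two external facts are equivalent, and your identification $C=-\ga$ is correct. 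The only point needing a touch more care is your parenthetical claim that $C=-\ga$ follows from \eqref{eq:Ga'(1)} ``by integration by parts'': a naive integration by parts of $\int_0^\infty e^{-t}\ln t\,dt$ produces two individually divergent pieces near $t=0$, so one must first split at $t=1$ (or pass via $\Ga(s)-1/s\to-\ga$ as $s\downarrow0$); this is routine and not a gap. For $a>0$ the two arguments coincide (dominated convergence).
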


\begin{proof}
The first two asymptotic relations in \eqref{eq:Ga sim} follow immediately by the \break  l'Hospital rule; the third asymptotic relation in \eqref{eq:Ga sim} follows immediately by, say, the  dominated convergence theorem.  

To prove \eqref{eq:Ga,a=0}, use the identities $\int_0^\infty e^{-t}\ln t\,dt=\Ga'(1)$ and \eqref{eq:Ga'(1)} to write 
\begin{equation*}
	\int_0^\infty e^{-t}\ln t\,dt=-\ga. 
\end{equation*}
So, integration by parts yields 
\begin{multline*}
	\Ga(0,x)=\int_x^\infty\frac1t\,e^{-t}\,dt
	=-e^{-x}\ln x-\ga-\int_0^x e^{-t}\ln t\,dt \\ 
	=-(1-x)\ln x+O(x^2|\ln x|)-\ga-\int_0^x \ln t\,dt+O(x^2|\ln x|) \\ 
	=-\ln x-\ga+x+O(x^2|\ln x|)=-\ln x-\ga+O(x)  
\end{multline*}
as $x\downarrow0$, which proves \eqref{eq:Ga,a=0} as well. 
\end{proof}


\begin{proof}[Proof of Proposition~\ref{prop:G}]
The cases with $a\in\{1,2\}$ in \eqref{eq:G bounds} are straightforward. 

Take now any $a\in[-1,\infty)\setminus\{1,2\}$. 
By the mean value theorem, $G_a(x)\sim x^{a-1}e^{-x}$ as $x\to\infty$, and now the 
exactness of the bound $G_a(x)$ on $\Ga(a,x)$ at $x=\infty$ follows 
by Lemma~\ref{lem:infty}. 


%

%
%

The exactness of the bound $G_a(x)$ on $\Ga(a,x)$ at $x=0$ for each real $a\ge0$ follows immediately from \eqref{eq:Ga sim} and \eqref{eq:g}. 

It remains to prove the inequalities in \eqref{eq:G bounds}. 
This proof relies on Lemma~\ref{lem:} and the ``special-case l'Hospital-type rules for monotonicity'' cited in Proposition~A.  

We are going to apply Proposition~A to the functions $f=f_a$ and $g=G_a$, where 
\begin{equation}\label{eq:f_a}
	f_a(x):=\Ga(a,x)
\end{equation}
and $G_a$ defined by \eqref{eq:g}. Then 
for $a\in(-1,\infty)\setminus\{0,1,2\}$
\begin{equation}\label{eq:rho to 1}
	\rho(x)=\rho_a(x):=\frac{G'_a(x)}{f'_a(x)}
	=x\,\frac{(1+b_a/x)^a-1}{a b_a}
	-\frac{(1+b_a/x)^{a-1}-1}{b_a}\underset{x\to\infty}\longrightarrow1  
\end{equation}
and 
\begin{equation*}
	\rho''(x)x^{a + 1} (b_a + x)^{3 - a}=(a - 1) \big(b_a (b_a - a) - (2-b_a) x\big). 
\end{equation*}

Consider now the case $a\in(1,2)$. Then, by Lemma~\ref{lem:}, for 
$c:=c_a:=\break 
b_a (b_a - a)/(2-b_a)\in(0,\infty)$ we have $\rho''>0$ on the interval $(0,c)$, and $\rho''<0$ on the interval $(c,\infty)$. So, by Lemma~\ref{lem:rho''}, $\rho$ is decreasing-increasing on $(0,\infty)$. 
Also, $f(\infty-)=g(\infty-)=0$. 
Therefore, by part~(ii) of Proposition~A, $r=g/f$ is decreasing-increasing on $(0,\infty)$. 
Also, by 
the exactness of the bound $G_a(x)$ on $\Ga(a,x)$, we have $r(0+)=r(\infty-)=1$. 
It follows that $r<1$ on $(0,\infty)$, which means that the second inequality in \eqref{eq:G bounds} holds. 

The first inequality in \eqref{eq:G bounds} is proved quite similarly for $a\in(-1,0)\cup(0,1)\cup(2,\infty)$ -- except that for $a\in(-1,0)$ by Lemma~\ref{prop:b} we have $r(0+)=1/b_a>1$, rather than $r(0+)=1$. 

In the remaining cases $a=-1$ and $a=0$, the proof of the first inequality in \eqref{eq:G bounds} is similar and even easier, especially in the case $a=-1$, where $\rho(x)=1+2/x$ is obviously decreasing in $x>0$; in the case $a=0$, we have $\rho''(x)x (1 + x)^3=x-1$.


This completes the proof of Proposition~\ref{prop:G}. 
\end{proof}

\begin{proof}[Proof of Proposition~\ref{prop:g}] 
This proof is very similar to, and even a bit simpler than, the proof of Proposition~\ref{prop:G}. 

Indeed, the cases with $a\in\{1,2,3\}$ in \eqref{eq:g bounds} are straightforward. 

Take now any $a\in[1,\infty)\setminus\{1,2,3\}$. 
By the mean value theorem, $g_a(x)\sim x^{a-1}e^{-x}$ as $x\to\infty$, and now the 
exactness of the bound $g_a(x)$ at $x=\infty$ follows by 
Lemma~\ref{lem:infty}. 

For any $a>0$ (and hence for any $a>1$), we have the trivial equalities $\Ga(a,0+)=\Ga(a)=g_a(0+)\in(0,\infty)$, so that 
the bound $g_a(x)$ is exact at $x=0$. 

It remains to prove the inequalities in \eqref{eq:g bounds}. 
We are going to apply Proposition~A to the functions $f=f_a$ and $g=g_a$, with $f_a$ defined by \eqref{eq:f_a} 
and $g_a$ defined by \eqref{eq:g-rev}. Then 
\begin{equation}\label{eq:rho to 1,g}
	\rho(x)=\frac{g'(x)}{f'(x)}
	=x\,\frac{(1+2/x)^a-1}{2a}
	-\frac{(1+2/x)^{a-1}-1}2
	+\frac{C_a}{a}\,x^{1-a}
	\underset{x\to\infty}\longrightarrow1  
\end{equation}
and 
\begin{equation}
	\rho''(x)\frac{x^{a + 1}}{2(a - 1)}= (2 - a) (2 + x)^{a - 3} + C_a/2, \label{eq:rho'',g} 
\end{equation}
where 
\begin{equation}\label{eq:C_a}
	C_a:=\Ga(a + 1) - 2^{a - 1}. 
\end{equation}
Note that the ratio $\Ga(a + 1)/2^{a - 1}$ is strictly log convex in $a$ and takes value $1$ when $a\in\{1,2\}$. 
So, 
\begin{equation}\label{eq:C_a<}
\text{for $a>1$ we have $C_a < 0$ iff $a < 2$, and  $C_a=0$ iff $a=2$. }	
\end{equation}

Consider now the case $a\in(2,3)$. Then, by \eqref{eq:rho'',g}, $\rho$ is strictly concave-convex and hence, by \eqref{eq:rho to 1,g} and Lemma~\ref{lem:rho''}, $\rho$ is increasing-decreasing,  
on $(0,\infty)$. 
Also, $f(\infty-)\break
=g(\infty-)=0$. 
Therefore, by part~(ii) of Proposition~A, $r=g/f$ is increasing-decreasing on $(0,\infty)$. 
Also, by the exactness of the bound $g_a(x)$ at $x=0$ and $x=\infty$, we have 
$r(0+)=r(\infty-)=1$. 
It follows that $r>1$ on $(0,\infty)$, which means that the second inequality in \eqref{eq:g bounds} holds. 

The first inequality in \eqref{eq:g bounds} is proved quite similarly. 


This completes the proof of Proposition~\ref{prop:g}. 
\end{proof}

\begin{proof}[Proof of Proposition~\ref{prop:compar}] 
Take indeed any $a\in(1,2)\cup(2,\infty)$ and any real $x>0$. 
Recall the definition of $C_a$ in \eqref{eq:C_a}. 
Consider \\ 
\begin{equation*}
	d(x):=\Big(\frac{g_a(x)}{G_a(x)} - 1\Big) 2 ((x+b_a)^a - x^a)
	=(2- b_a) x^a + b_a (x+2)^a - 2 (x+b_a)^a + 2b_a C_a, 
\end{equation*}
which equals $g_a(x)-G_a(x)$ in sign, and then 
\begin{equation*}
	d_1(u):=d'(1/u)u^{a-1}/a=2 - 2 (1 + b_a u)^{a-1} + b_a ((1 + 2 u)^{a-1}-1)
\end{equation*}
and 
\begin{equation*}
	\frac{d'_1(u)}{2 (a - 1) b_a}=(1+2 u)^{a-2}-(1+b_a u)^{a-2}<0
\end{equation*}
for $u>0$, since, in view of Lemma~\ref{prop:b}, $b_a-2$ equals $a-2$ in sign. 
So, $d_1$ is decreasing on $(0,\infty)$, from $d_1(0+)=0$. 
It follows that $d_1<0$ and hence $d$ is decreasing on $(0,\infty)$, from $d(0+)=0$. 
Thus, $d<0$ on $(0,\infty)$, 
which completes the proof of Proposition~\ref{prop:compar}. 
\end{proof}

\begin{proof}[Proof of Theorem~\ref{th:a<-1}]
Take indeed any real $a<-1$ and $x>0$. 

Consider first the lower bound $g_a^\lo(x)$ on $\Ga(a,x)$ and, within this consideration, let $g:=g_a^\lo$, for the simplicity of writing. Let then $f:=f_a$, with $f_a$ as in \eqref{eq:f_a}, and let $r=g/f$ and $\rho=g'/f'$, as in Proposition~A. Then 
\begin{equation*}
	\rho(x)=
	\frac{u^4+2 (a-1) u^2-4 a u-a^2}{(u^2+a)^2}, \quad\text{with}\quad u:=x-a,
\end{equation*}
so that $u>-a>1$ and hence $u^2>u>-a$ and $u^2+a>0$. 
Next, 
\begin{equation*}
	\rho'(x)(u^2+a)^3/(4x)
	=x^2 - a (a + 1)  
\end{equation*}
and $a (a + 1)>0$. So, $\rho$ is decreasing-increasing on $(0,\infty)$. 
Also, $f(\infty-)=g(\infty-)=0$. 
Therefore, by part~(ii) of Proposition~A, $r=g/f$ is decreasing-increasing on $(0,\infty)$. 
Also, $\rho(0+)=1=\rho(\infty-)$, $f(0+)=\infty=g(0+)$, and $f(\infty-)=0=g(\infty-)$, 
whence, by the l'Hospital rule for limits, $r(0+)=1=r(\infty-)$. 
Thus, $r<1$ on $(0,\infty)$. We conclude that the first inequality 
in \eqref{eq:bnds,a<-1} and the 
exactness properties 
concerning the lower bound $g_a^\lo(x)$ on $\Ga(a,x)$ do hold. 

The corresponding proof for the upper bound $g_a^\hi(x)$ is similar and even simpler. Indeed, letting here $g:=g_a^\hi$ and, as before, $f:=f_a$, $r=g/f$, and $\rho=g'/f'$, we have 
\begin{equation*}
	\rho(x)=\frac{(x - a)^2 + x}{(x - a)^2} \quad\text{and}\quad  
	\rho'(x)=-\frac{x+a}{(x - a)^3}, 
\end{equation*}
so that $\rho$ is increasing-decreasing on $(0,\infty)$. 
Also, $f(\infty-)=g(\infty-)=0$. 
Therefore, by part~(ii) of Proposition~A, $r=g/f$ is increasing-decreasing on $(0,\infty)$. 
Also, $\rho(0+)=1=\rho(\infty-)$, $f(0+)=\infty=g(0+)$, and $f(\infty-)=0=g(\infty-)$, 
whence, by the l'Hospital rule for limits, $r(0+)=1=r(\infty-)$. 
Thus, $r>1$ on $(0,\infty)$. We conclude that the second inequality 
in \eqref{eq:bnds,a<-1} and the 
exactness properties 
concerning the upper bound $g_a^\hi(x)$ on $\Ga(a,x)$ do hold. 

Theorem~\ref{th:a<-1} is now proved. 
\end{proof}

\begin{proof}[Proof of Proposition~\ref{prop:a->-infty,unif}]
The ratio in \eqref{eq:max ghi/glo} equals $\frac{(x-a)^2+a}{(x-a-1)(x-a)}$, and its partial \break 
derivative in $x$ equals $a(a+1) - x^2$ in sign. So, for each $a<-1$ this ratio attains its maximum in $x>0$ at $x=\sqrt{a(a+1)}$, and the value of this maximum is $\frac{2}{1+\sqrt{1+1/a}}\to1$ as $a\to-\infty$. Thus, asymptotic relations \eqref{eq:max ghi/glo} and \eqref{eq:a->-infty,unif} are verified. 
\end{proof}

\begin{proof}[Proof of Proposition~\ref{prop:a->infty}]
Let indeed $a\to\infty$. 
By Stirling's formula, $b_a\sim a/e$ and hence $G_a(a)=(a/e)^a(1+1/e)^a e^{o(a)}$, whereas $\Ga(a,a)\le\Ga(a)=(a/e)^a e^{o(a)}$, so that 
\begin{equation*}
	\max_{x>0}\dfrac{G_a(x)}{\Ga(a,x)}\ge\dfrac{G_a(a)}{\Ga(a,a)}\ge(1+1/e)^a e^{o(a)}\to\infty, 
\end{equation*}
and the first asymptotic relation in \eqref{eq:max ghi/glo,infty} follows. 

Letting now $w(t):=w_a(t):=(a-1)\ln t-t$, we have $w(a-1)=(a-1)\ln\frac{a-1}e$, $w'(a-1)=0$, and $w''(t)=-\frac{a-1}{t^2}>-\frac1{a-1}$ for $t>a-1$, so that 
\begin{multline*}
	\Ga(a,a-1)=\int_{a-1}^\infty e^{w(t)}dt
	>\int_{a-1}^\infty \exp\Big\{w(a-1)-\frac{(t-(a-1))^2}{2(a-1)}\Big\}dt \\ 
	=\Big(\frac{a-1}e\Big)^{a-1}\sqrt{\frac{\pi(a-1)}2}. 
\end{multline*}
On the other hand, it is easy to see that 
\begin{equation*}
	g_a(a-1)\sim\frac{e^2-1}2\,\Big(\frac{a-1}e\Big)^{a-1}. 
\end{equation*}
So, 
\begin{equation*}
	\max_{x>0}\dfrac{\Ga(a,x)}{g_a(x)}\ge\dfrac{\Ga(a,a-1)}{g_a(a-1)}
	=\frac{2+o(1)}{e^2-1}\sqrt{\frac{\pi a}2}\to\infty, 
\end{equation*}
and the second asymptotic relation in \eqref{eq:max ghi/glo,infty} follows as well. 
\end{proof}

\begin{proof}[Proof of Proposition~\ref{prop:bounded}] 
Since $\Ga(a,x)$, $G_a(x)$, and $g_a(x)$ are (strictly) positive and continuous in $(a,x)$, it follows that $\de G_a(x)$ and $\de g_a(x)$ are bounded away from $-1$ and $\infty$ over all $a\in[1,a_*]$ and $x\in[0,a]$. 

On the other hand, by the mean value theorem and Lemma~\ref{prop:b}, 
\begin{equation*}
	x^{a-1}e^{-x}\le G_a(x)\le(x+b_a)^{a-1}e^{-x}\le x^{a-1}e^{-x}(1+b_a/a)^{a-1}
	\le x^{a-1}e^{-x}(1+b_{a_*})^{a_*-1}   
\end{equation*}
for all $a\in[1,a_*]$ and $x\in[a,\infty)$. 

Again by the mean value theorem and in view of 
\eqref{eq:C_a<}, 
\begin{equation*}
	g_a(x)\ge(x^{a-1}+C_a/a)e^{-x}\ge x^{a-1}e^{-x}  
\end{equation*}
for all real $a\ge2$ and $x>0$.
Also, once again by the mean value theorem,
\begin{equation*}
	g_a(x)\le((x+2)^{a-1}+\Ga(a))e^{-x}\le x^{a-1}e^{-x}((1+2/a)^{a-1}+\Ga(a)/a^{a-1})
	\le x^{a-1}e^{-x} K 
\end{equation*}
for some universal positive real constant $K$ and all $a\in[2,a_*]$ and $x\in[a,\infty)$. 

Finally, by Proposition~\ref{prop:f_a} (to be proved next), 
\begin{equation*}
	x^{a-1}e^{-x}\le\Ga(a,x)\le a x^{a-1}e^{-x}
\end{equation*}
for all $a\in[1,\infty)$ and $x\in[a,\infty)$.

Collecting all the pieces together, we complete the proof of  Proposition~\ref{prop:bounded}.  
\end{proof}

\begin{proof}[Proof of Proposition~\ref{prop:f_a}] 
For all real $x>0$ 
\begin{equation*}
	\Ga(a,x)=\int_x^\infty t^{a-1} e^{-t}\,dt\ge x^{a-1}\int_x^\infty e^{-t}\,dt=x^{a-1} e^{-x}, 
\end{equation*}
which proves the first inequality in Proposition~\ref{prop:f_a}. 

Next, 
for $t>x>a-1$, let $h(t):=(a-1)\ln t-t$. Since the function $h$ is strictly concave, for $t>x$ we have 
\begin{equation*}
	h(t)<h_x(t):=h(x)+h'(x)(t-x)=(a-1)\ln x-x+((a-1)/x-1)(t-x) 
\end{equation*}
So,
\begin{equation*}
	\Ga(a,x)=\int_x^{\infty}e^{h(t)}\,dt
	<\int_x^{\infty}e^{h_x(t)}\,dt =\frac{x^{a-1} e^{-x}}{1-(a-1)/x}, 
\end{equation*}
which proves the second inequality in Proposition~\ref{prop:f_a} as well. 
\end{proof}

\begin{proof}[Proof of Proposition~\ref{prop:g_{a;2}^lo}] 
Inequality \eqref{eq:>g_{a;2}^lo} follows immediately from the discussion of forward-shift bounds in the paragraph containing formula \eqref{eq:B} and the first inequality in \eqref{eq:bnds,a<-1}. The exactness of the bound $g_{a;2}^\lo(x)$ on $\Ga(a,x)$ at $x=\infty$ follows immediately from \eqref{eq:g_{a;2}^lo} and 
Lemma~\ref{lem:infty}. 
\end{proof}

\begin{proof}[Proof of Proposition~\ref{prop:G_{a;-1}}] 
Consider first the case $a\ne0$, so that $a\in(-2,1)\setminus\{0\}$. Then 
inequality \eqref{eq:ineq,G_{a;-1}} follows immediately from 
equality \eqref{eq:eq,G_{a;-1}} and 
inequalities 
\eqref{eq:-1 le a<1} and \eqref{eq:1<a<2}, and 
the exactness of the bound $G_{a;-1}(x)$ on $\Ga(a,x)$ at $x=0$ follows by \eqref{eq:eq,G_{a;-1}}, \eqref{eq:g}, and Lemma~\ref{lem:0}. 

To complete the proof of Proposition~\ref{prop:G_{a;-1}}, consider now the case $a=0$. 
The second equality in \eqref{eq:0,G_{a;-1}} can be obtained as follows. In view of \eqref{eq:eq,G_{a;-1}} and \eqref{eq:g}, write $G_{a;-1}(x)$ as the ratio with denominator $a(a+1)b_{a+1}$; replace $(a+1)b_{a+1}$ in the denominator by $\lim_{a\to0}(a+1)b_{a+1}=b_1$; finally, use l'Hospital's rule. 

The exactness of the bound $G_{0;-1}(x)$ on $\Ga(0,x)$ at $x=0$ follows by \eqref{eq:0,G_{a;-1}} and Lemma~\ref{lem:0}. 
The non-strict version of inequality \eqref{eq:ineq,G_{a;-1}} for $a=0$ follows by continuity from inequality \eqref{eq:ineq,G_{a;-1}} for $a\ne0$. 

However, the strict inequality \eqref{eq:ineq,G_{a;-1}} for $a=0$ requires proof, which is somewhat similar to the proofs of inequalities in Theorems~\ref{th:} and \ref{th:a<-1}. 
Again, we are going to apply Proposition~A, now to the functions $f=\Ga(0,\cdot)$ and $g=G_{0;-1}$, where $G_{0;-1}$ is as in \eqref{eq:0,G_{a;-1}}.  
Then for real $x>0$ 
\begin{align*}
	\rho(x)&:=\frac{g'(x)}{f'(x)}
	=x \Big(1+\frac{x-1}{b_1}\Big) \ln\frac{b_1+x}x-x+1, \\ 
	\rho'(x)&=\frac{1}{b_1+x}-2
	+\frac{b_1+2x-1}{b_1}\,\ln\frac{b_1+x}x,\\
	\rho''(x)&=\frac{d_2(x)}{b_1 x (b_1+x)^2}, \\ 
	d_2(x)&:=
	2 x (b_1+x){}^2 \ln\frac{b_1+x}{x}-b_1 (b_1 (3 x-1)+b_1^2+2
   x^2), \\ 
	\rho'''(x)&=b_1\,\frac{(b_1-1) b_1-x (3-b_1)}{x^2 (b_1+x)^2}. 
\end{align*}

By Lemma~\ref{prop:b}, $1<b_1<2$. So, $\rho'''$ is $+-$ on $(0,\infty)$ -- that is, 
there is some $c\in[0,\infty]$ such that $\rho'''>0$ on $(0,c)$ and $\rho'''<0$ on $(c,\infty)$ (in this case, we actually have $c\in(0,\infty)$). 
So, $\rho''$ is increasing-decreasing on $(0,\infty)$. Also, $x^3\rho''(x)\to(1-b_1/3)b_1>0$ as $x\to\infty$. 
So, $\rho''$ is $-+$ on $(0,\infty)$.  
So, $\rho$ is strictly concave-convex on $(0,\infty)$. 
Also, $\rho(\infty-)=b_1/2\in\R$. 
So, by Lemma~\ref{lem:rho''}, $\rho$ is increasing-decreasing on $(0,\infty)$.  
Also, $f(\infty-)=g(\infty-)=0$. 
Therefore, by part~(ii) of Proposition~A, $r=g/f$ is increasing-decreasing on $(0,\infty)$. 

Also, for real $x>0$ 
\begin{align*}
	r'(x)&=\frac{d(x)}{b_1 e^{2 x} x \Ga(0, x)^2}, \\ 
	d(x)&:=
	(b_1+x) (\ln (b_1+x)-\ln x)-b_1 \\
	&+e^x \Ga(0,x) 
	(b_1(x-1)+x (1-b_1-x) (\ln (b_1+x)-\ln x)). 
\end{align*}
Making here the substitutions $\ln(b_1+x)=\ln b_1 + c_1 x$, $e^x=1 + c_2 x$, and, in accordance with \eqref{eq:Ga,a=0} and \eqref{eq:b}, 
$\Ga(0,x)=-\ln x-\ga+c_3 x=-\ln x+\ln b_1-1+c_3 x$, where $c_j=c_j(x)=O(1)$ as $x\downarrow0$ for each $j\in\{1,2,3\}$, we see that $r'(0+)=1/b_1-1<0$. 
So, the increasing-decreasing function $r$ is actually decreasing everywhere on $(0,\infty)$. 
Also,  
the already established exactness of the bound $G_{0;-1}(x)$ on $\Ga(0,x)$ at $x=0$ means 
that $r(0+)=1$. 
Thus, $r<1$ on $(0,\infty)$; that is, inequality \eqref{eq:ineq,G_{a;-1}} holds for $a=0$. 



This completes the proof of Proposition~\ref{prop:G_{a;-1}}.  
\end{proof}

\begin{proof}[Proof of Proposition~\ref{prop:G_{a;1}}] 
This follows immediately from Proposition~\ref{prop:taming} (with $k=1$) and Theorem~\ref{th:}. 
More specifically, 
the case $1<a<2$ follows from part~(ii) of Proposition~\ref{prop:taming} and \eqref{eq:-1 le a<1}; 
the case $a=2$ follows from parts~(i) and (ii) of Proposition~\ref{prop:taming} and \eqref{eq:a=1}; and 
the case $2<a<3$ follows from part~(i) of Proposition~\ref{prop:taming} and \eqref{eq:1<a<2}. 
\end{proof}

\begin{proof}[Proof of Proposition~\ref{prop:g_{a;1}^lo}] 
The first inequality in \eqref{eq:ineq,g_{a;1}^lo} follows immediately from the first equality in \eqref{eq:eq,g_{a;1}^lo}, the first inequality in \eqref{eq:bnds,a<-1}, and identity \eqref{eq:Ga,fwd}. 
The second inequality in \eqref{eq:ineq,g_{a;1}^lo} follows because 
\begin{equation*}
	g_a^\hi(x)-g_{a;1}^\lo(x)=\frac{x^{1 + a}e^{-x}}{(x-a)\big((x-a)^2-a+2 x\big)}>0
\end{equation*}
for $a<0$ and $x>0$. 

The exactness of the upper bound $g_{a;1}^\lo(x)$ on $\Ga(a,x)$ at $x=0$ and at $x=\infty$ follows immediately from inequalities \eqref{eq:ineq,g_{a;1}^lo} and the exactness of $g_a^\hi(x)$ at $x=0$ and at $x=\infty$. 
%
\end{proof}

\bibliographystyle{abbrv}

\bibliography{P:/pCloudSync/mtu_pCloud_02-02-17/bib_files/citations10.13.18a}

\def\cprime{$'$} \def\polhk#1{\setbox0=\hbox{#1}{\ooalign{\hidewidth
  \lower1.5ex\hbox{`}\hidewidth\crcr\unhbox0}}}
  \def\polhk#1{\setbox0=\hbox{#1}{\ooalign{\hidewidth
  \lower1.5ex\hbox{`}\hidewidth\crcr\unhbox0}}}
  \def\polhk#1{\setbox0=\hbox{#1}{\ooalign{\hidewidth
  \lower1.5ex\hbox{`}\hidewidth\crcr\unhbox0}}} \def\cprime{$'$}
  \def\polhk#1{\setbox0=\hbox{#1}{\ooalign{\hidewidth
  \lower1.5ex\hbox{`}\hidewidth\crcr\unhbox0}}} \def\cprime{$'$}
  \def\polhk#1{\setbox0=\hbox{#1}{\ooalign{\hidewidth
  \lower1.5ex\hbox{`}\hidewidth\crcr\unhbox0}}} \def\cprime{$'$}
  \def\cprime{$'$}
\begin{thebibliography}{10}

\bibitem{MR-gautschi59}
{Mathematical} {R}eviews, {R}eview {MR}0103289, 1960.
\newblock https://mathscinet.ams.org/mathscinet-getitem?mr=MR0103289.

\bibitem{alzer97}
H.~Alzer.
\newblock On some inequalities for the incomplete gamma function.
\newblock {\em Math. Comp.}, 66(218):771--778, 1997.

\bibitem{alzer-baricz12}
H.~Alzer and A.~Baricz.
\newblock Functional inequalities for the incomplete gamma function.
\newblock {\em J. Math. Anal. Appl.}, 385(1):167--178, 2012.

\bibitem{andrews}
G.~E. Andrews, R.~Askey, and R.~Roy.
\newblock {\em Special functions}, volume~71 of {\em Encyclopedia of
  Mathematics and its Applications}.
\newblock Cambridge University Press, Cambridge, 1999.

\bibitem{borwein-chan}
J.~M. Borwein and O.-Y. Chan.
\newblock Uniform bounds for the complementary incomplete gamma function.
\newblock {\em Math. Inequal. Appl.}, 12(1):115--121, 2009.

\bibitem{gautschi59}
W.~Gautschi.
\newblock Some elementary inequalities relating to the gamma and incomplete
  gamma function.
\newblock {\em J. Math. and Phys.}, 38:77--81, 1959/60.

\bibitem{gaut-personal}
W.~Gautschi.
\newblock Personal communication, 2019.

\bibitem{greengard-rokhlin19}
P.~Greengard and V.~Rokhlin.
\newblock An algorithm for the evaluation of the incomplete gamma function.
\newblock {\em Adv. Comput. Math.}, 45(1):23--49, 2019.

\bibitem{laforgia-nat06}
A.~Laforgia and P.~Natalini.
\newblock Supplements to known monotonicity results and inequalities for the
  gamma and incomplete gamma functions.
\newblock {\em J. Inequal. Appl.}, pages Art. ID 48727, 8, 2006.

\bibitem{nat-pal00}
P.~Natalini and B.~Palumbo.
\newblock Inequalities for the incomplete gamma function.
\newblock {\em Math. Inequal. Appl.}, 3(1):69--77, 2000.

\bibitem{neuman13}
E.~Neuman.
\newblock Inequalities and bounds for the incomplete gamma function.
\newblock {\em Results Math.}, 63(3-4):1209--1214, 2013.

\bibitem{paris02}
R.~B. Paris.
\newblock Error bounds for the uniform asymptotic expansion of the incomplete
  gamma function.
\newblock {\em J. Comput. Appl. Math.}, 147(1):215--231, 2002.

\bibitem{pin01}
I.~Pinelis.
\newblock {L}'{H}ospital type rules for oscillation, with applications.
\newblock {\em JIPAM. J. Inequal. Pure Appl. Math.}, 2(3):Article 33, 24 pp.
  (electronic), 2001.

\bibitem{pin06}
I.~Pinelis.
\newblock On l'{H}ospital-type rules for monotonicity.
\newblock {\em {JIPAM}. J. Inequal. Pure Appl. Math.}, 7(2):Article 40, 19 pp.
  (electronic),
  \url{www.emis.de/journals/JIPAM/images/157_05_JIPAM/157_05.pdf}, 2006.

\bibitem{qi02}
F.~Qi.
\newblock Monotonicity results and inequalities for the gamma and incomplete
  gamma functions.
\newblock {\em Math. Inequal. Appl.}, 5(1):61--67, 2002.

\bibitem{qi99}
F.~Qi and S.-L. Guo.
\newblock Inequalities for the incomplete gamma and related functions.
\newblock {\em Math. Inequal. Appl.}, 2(1):47--53, 1999.

\bibitem{temme79}
N.~M. Temme.
\newblock The asymptotic expansion of the incomplete gamma functions.
\newblock {\em SIAM J. Math. Anal.}, 10(4):757--766, 1979.

\bibitem{yang-zhang-chu17}
Z.-H. Yang, W.~Zhang, and Y.-M. Chu.
\newblock Sharp {G}autschi inequality for parameter {$0<p<1$} with
  applications.
\newblock {\em Math. Inequal. Appl.}, 20(4):1107--1120, 2017.

\end{thebibliography}

%
%
%
%
%
%
%
%
%
%

\end{document}